\theoremstyle{plain}
\newtheorem{thm}{Theorem}[section]
\newtheorem{claim}[thm]{Claim}
\newtheorem{example}[thm]{Example}
\newtheorem{remark}[thm]{Remark}
\newtheorem{theorem}[thm]{Theorem}
\newtheorem{fact}[thm]{Fact}
\newtheorem{ex}[thm]{Example}
\numberwithin{equation}{section}
\newcommand{\N}{\mathbb{N}}
\newcommand{\R}{\mathbb{R}}
\DeclareMathOperator{\dom}{dom\,\!}
\DeclareMathOperator{\dist}{dist\,\!}
\DeclareMathOperator{\interior}{int\,\!}
\begin{document}

\title[Extensions of convex functions with prescribed subdifferentials]{Extensions of convex functions with prescribed subdifferentials}

\author{Daniel Azagra}
\address{ICMAT (CSIC-UAM-UC3-UCM), Departamento de An{\'a}lisis Matem{\'a}tico y Matem\'atica Aplicada,
Facultad Ciencias Matem{\'a}ticas, Universidad Complutense, 28040, Madrid, Spain.  {\bf DISCLAIMER:} 
The first-named author is affiliated with Universidad Complutense de Madrid, but this does not mean this institution has offered him all the support he expected. On the contrary, the Biblioteca Complutense has hampered his research by restricting his access to books.
}
\email{azagra@mat.ucm.es}

\author{Juan Ferrera}
\address{IMI, Departamento de An{\'a}lisis Matem{\'a}tico y Matem{\'a}tica Aplicada,
Facultad Ciencias Matem{\'a}ticas, Universidad Complutense, 28040, Madrid, Spain }
\email{ferrera@mat.ucm.es}

\author{Javier G\'{o}mez-Gil}
\address{Departamento de An{\'a}lisis Matem{\'a}tico y Matem{\'a}tica Aplicada,
Facultad Ciencias Matem{\'a}ticas, Universidad Complutense, 28040, Madrid, Spain }
\email{gomezgil@mat.ucm.es}

\author{Carlos Mudarra}
\address{ICMAT (CSIC-UAM-UC3-UCM), Calle Nicol\'as Cabrera 13-15.
28049 Madrid, Spain}
\email{carlos.mudarra@icmat.es}

\date{November 10, 2018}

\keywords{convex function, extension, subdifferential, set-valued mapping, Banach space}

\subjclass[2010]{26B05, 26B25, 49J52, 54C20, 54C60}

\begin{abstract}
Let $E$ be an arbitrary subset of a Banach space $X$, $f: E \rightarrow \mathbb{R}$ be a function, and $G:E \rightrightarrows X^*$ be a set-valued mapping. We give necessary and sufficient conditions on $f, G$ for the existence of a continuous convex extension $F: X \rightarrow \mathbb{R} $ of $f$ such that the subdifferential $\partial F$ of $F$ coincides with $G$ on $E.$
\end{abstract}

\maketitle

\section{Introduction and main results}

In this paper we are concerned with the following nonsmooth extension problem.

\medskip 

{\em Let $X$ be a Banach space and $E$ be an arbitrary subset of $X$. If $f:E \to \R$ is a function and $G:E \rightrightarrows X^*$ is a set-valued mapping, what necessary and sufficient conditions on $f, G$ will guarantee the existence of a continuous convex extension $F$ of $f$ to all of $X$ such that $\partial F(x)=G(x)$ for every $x\in E$? }

\medskip

As a consequence of the results of K. Schulz and B. Schwartz in \cite{SchulzSchwartz}, a partial answer to this question was given in the finite-dimensional setting; more precisely, if $E$ is convex and $G$ is the subdifferential mapping of $f$ on $E,$ then there exists a finite convex extension $F$ of $f$ to all of $\R^n$ with $G(x) \subset \partial F(x)$ for every $x\in E$ if and only if, for every pair of sequences $(y_k)_k \subset E, \: (y_k^*)_k$ with $y_k^* \in G(y_k)$ and $\lim_k |y_k^*|= +\infty,$ one has that $\lim_k \frac{\langle y_k,y_k^*\rangle-f(y_k)}{|y_k^*|}= +\infty$ too. Moreover, whenever this condition is satisfied, the function
$$
F(x)=\sup \lbrace f(y)+ \langle y^*, x-y \rangle \: : \: y\in E, \: y^*\in G(y) \rbrace,\quad x\in \R^n;
$$ defines such an extension. Nevertheless the subdifferential $\partial F(x)$ of $F$ at points $x\in E$ does not necessarily coincide with $G(x).$

\medskip

Very recently, a related problem has been solved for convex functions of the classes $C^1(\R^n)$ and $C^{1, \omega}(X)$ (for a Hilbert space $X$) in the situation where the mapping $G$ is single-valued and one additionally requires that the extension $F$ be of class $C^1(\R^n)$ (which amounts to asking that $\partial F(x)$ be a singleton for every $x\in \R^n$) or of class $C^{1, \omega}(X)$; see \cite{AzagraMudarraExplicitFormulas, AzagraMudarra2017PLMS, AzagraMudarraGlobalGeometry}. A solution to a similar problem for general (not necessarily convex) functions was given in \cite[Theorem 5]{FerreraGil}, characterizing the pairs $f:E\to \R$, $G:E \rightrightarrows \R^n$ with $f$ continuous and $G$ upper semicontinuous and nonempty, compact and convex-valued which admit a (generally nonconvex) extension $F$ of $f$ whose Fr\'echet subdifferential is upper semicontinuous on $\R^n$ and extends $G$ from $E.$ 

\medskip

Let us also mention the results by B. Mulansky and M. Neamtu \cite{Mulansky} which prove that any finite subset of data on $\R$ or $\R^2$ which is {\em strictly convex} in an appropriate sense can be interpolated by a convex polynomial and the work by O. Bucicovschi and J. Lebl \cite{BucicovschiLebl} which study the continuity and regularity of extensions of functions defined on compact subset $K$ of $\R^n$ to the convex hull of $K.$

\medskip

In the infinite-dimensional setting, several characterizations of those pairs of Banach spaces $Y \subset X$ for which every continuous convex function defined on $Y$ admits a continuous convex extension to the superspace $X$ have been established by J. Borwein, S. Fitzpatrick, V. Montesinos and J. Vanderwerff in \cite{BorFitzVan, BorweinMontesinosVanderwerff} and by C. A. De Bernardi and L. Vesel\'y in \cite{BernardiVesely1, BernardiVesely2, Bernardi}.  A consequence of these results is that, if $X$ is a normed space and $Y$ is a closed subspace of $X$ such that $X/Y$ is separable, then every continuous convex function on $Y$ can be extended to a continuous convex function on $X.$ On the other hand, if $X=\ell_\infty$ and $Y=c_0$ or $\ell_p$ with $1<p<\infty,$ there are examples of continuous convex functions on $Y$ which have no continuous convex extensions to all of $X.$ Also \cite[Theorem 3.1]{BernardiVesely1} asserts that if $A$ is an open convex subset of a topological vector space $X$, $Y$ is a subspace of $X$ and $f: A \cap Y \to \R$ is a continuous convex function, then $f$ admits a continuous convex extension $F: A \to \R$ if and only if $A= \bigcup_n A_n$  for some increasing sequence $\lbrace A_n \rbrace_n$ of open convex subsets of $A$ such that $f$ is bounded on each $A_n \cap Y.$ See also the paper \cite{VeselyZajicek} by L. Vesel\'y and L. Zaj\'i\v{c}ek for results on the extension of delta-convex functions.

\medskip

The following theorem is one of the main results of this paper and provides a complete answer to the problem in finite-dimensional spaces. 

\begin{theorem}\label{maintheoremfinitedimensional}
Let $E \subset \R^n$ be arbitrary, $f: E \to \R$ be a function and $G: E \rightrightarrows \R^n$ be a set-valued mapping such that $G(x)$ is compact, convex and nonempty for every $x\in E.$ There exists a convex function $F: \R^n \to \R$ such that $F(x)=f(x)$ and $\partial F(x)= G(x)$ for every $x\in E$ if and only if the following conditions are satisfied.
\item[] $(C):$ $f(x) \geq f(y)+ \langle y^*,x-y \rangle$ for every $x,y\in E, \:y^*\in G(y).$ 
\item[] $(EX):$ If $(y_k)_k \subset E, \: (y_k^*)_k \subset \R^n$ are such that $y_k^* \in G(y_k)$ for every $k$ and $\lim_k |y_k^*| =+\infty,$ then
$$
\lim_k \frac{\langle y_k,y_k^*\rangle-f(y_k)}{|y_k^*|}= +\infty.
$$
\item[] $(SCS):$ If $x\in E$ and $(y_k)_k \subset E, \: (y_k^*)_k \subset \R^n$ are such that $y_k^* \in G(y_k)$ for every $k,$ $(y_k^*)_k$ is bounded, and 
$$
\lim_k \left( f(x)-f(y_k)-\langle y_k^*, x-y_k \rangle \right)=0,
$$
then there exists a sequence $(x_k^*)_k \subset G(x)$ such that $\lim_k |x_k^*-y_k^*|=0.$
\end{theorem}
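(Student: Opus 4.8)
The plan is to prove sufficiency by exhibiting the natural candidate, the Fenchel-type supremum
$$F(x)=\sup\{f(y)+\langle y^*,x-y\rangle \,:\, y\in E,\ y^*\in G(y)\}, \qquad x\in\R^n,$$
which is convex and lower semicontinuous as a supremum of affine functions. First I would check that condition $(C)$ forces $F(x)=f(x)$ on $E$ (the term $y=x$, $y^*\in G(x)$ contributes $f(x)$, while $(C)$ bounds every other term by $f(x)$), and that $(EX)$ forces $F$ to be finite on all of $\R^n$: writing each admissible slope $y^*$ against its intercept $\langle y^*,y\rangle-f(y)$ and using that $(EX)$ makes these intercepts grow superlinearly in $|y^*|$, every pointwise supremum is controlled by a bounded set of slopes and is finite. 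A finite convex function on $\R^n$ is automatically continuous, so $F$ is a continuous convex extension of $f$. The inclusion $G(x)\subset\partial F(x)$ for $x\in E$ is then immediate: for $x^*\in G(x)$ and any $z$, the defining supremum gives $F(z)\ge f(x)+\langle x^*,z-x\rangle=F(x)+\langle x^*,z-x\rangle$.

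Necessity is the easier half. Given convex $F$ with $F|_E=f$ and $\partial F|_E=G$, condition $(C)$ is just the subgradient inequality. For $(EX)$ I would invoke Fenchel duality: since $y_k^*\in\partial F(y_k)$ and $F(y_k)=f(y_k)$, the Fenchel equality yields $F^*(y_k^*)=\langle y_k,y_k^*\rangle-f(y_k)$, and finiteness of $F$ on all of $\R^n$ is equivalent to $F^*$ being $1$-coercive, i.e. $F^*(p)/|p|\to+\infty$ as $|p|\to\infty$; this is exactly $(EX)$. For $(SCS)$, note that $f(x)-f(y_k)-\langle y_k^*,x-y_k\rangle=F(x)+F^*(y_k^*)-\langle x,y_k^*\rangle\ge0$, so its vanishing means the bounded sequence $(y_k^*)$ drives the lower semicontinuous convex function $p\mapsto F(x)+F^*(p)-\langle x,p\rangle$ to its minimum value $0$; since the zero set of this function is precisely the compact set $\partial F(x)=G(x)$, a Bolzano--Weierstrass plus lower semicontinuity argument gives $\dist(y_k^*,G(x))\to0$, and choosing nearest points produces the required $x_k^*\in G(x)$.

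The heart of the theorem is the reverse inclusion $\partial F(x)\subset G(x)$ for $x\in E$, where $(SCS)$ does the decisive work, and I would argue by contradiction. If $x^*\in\partial F(x)\setminus G(x)$, then, $G(x)$ being compact and convex, Hahn--Banach provides a direction $v$ with $\langle x^*,v\rangle>\alpha:=\max_{p\in G(x)}\langle p,v\rangle$. To generate sequences to which $(SCS)$ applies, I would probe $F$ along the ray $x+tv$: expanding the supremum gives $F(x+tv)=\sup_{y,y^*}\{[f(y)+\langle y^*,x-y\rangle]+t\langle y^*,v\rangle\}$, while $x^*\in\partial F(x)$ gives $F(x+tv)\ge f(x)+t\langle x^*,v\rangle$. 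Taking, for $t=t_k\downarrow0$, near-maximizers $(y_k,y_k^*)$ within $t_k^2$, and writing $A_k:=f(x)-f(y_k)-\langle y_k^*,x-y_k\rangle\ge0$, this comparison yields both $\langle y_k^*,v\rangle\ge\langle x^*,v\rangle-t_k$ and $A_k\le t_k(\langle y_k^*,v\rangle-\langle x^*,v\rangle)+t_k^2$.

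The key step is then to show $(y_k^*)$ stays bounded: if $|y_k^*|\to\infty$, condition $(EX)$ makes $\langle y_k^*,y_k\rangle-f(y_k)$ grow superlinearly, which drives the near-maximal value $-(\langle y_k^*,y_k\rangle-f(y_k))+\langle y_k^*,x+t_kv\rangle$ to $-\infty$, contradicting its lower bound $\approx f(x)$. With $(y_k^*)$ bounded, $\langle y_k^*,v\rangle$ is bounded, so the second estimate forces $A_k\to0$; now $(SCS)$ applies and yields $x_k^*\in G(x)$ with $|x_k^*-y_k^*|\to0$. Since $\langle x_k^*,v\rangle\le\alpha$, letting $k\to\infty$ in $\langle x^*,v\rangle-t_k\le\langle y_k^*,v\rangle=\langle x_k^*,v\rangle+o(1)\le\alpha+o(1)$ gives $\langle x^*,v\rangle\le\alpha$, contradicting the choice of $v$. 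The main obstacle I anticipate is exactly this boundedness-and-limit bookkeeping: one must calibrate the perturbation scale $t_k$ and the near-maximality tolerance so that $(EX)$ delivers boundedness of the multipliers and the deficiencies $A_k$ genuinely tend to $0$, which is what makes $(SCS)$ usable; once the right sequences are in hand, the separation argument closes immediately.
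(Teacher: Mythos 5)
Your proposal is correct, but it reaches the theorem by a route that differs from the paper's in instructive ways. The paper never proves Theorem \ref{maintheoremfinitedimensional} directly: it deduces it from the Banach-space results (Theorems \ref{maingeneraltheorem} and \ref{maintheoremseparable}, linked by Remark \ref{remarkcsseparablescs}), whose proofs must avoid tools specific to $\R^n$. Your sufficiency half essentially coincides with the paper's: the same supremum formula for $F$, the same verifications ($F=f$ on $E$ from $(C)$, finiteness from $(EX)$, continuity, $G\subset\partial F$), and the same scheme for the key inclusion $\partial F(x)\subset G(x)$ — Hahn--Banach separation, near-maximizers along a short segment emanating from $x$, boundedness of the multipliers, the defects $A_k\to 0$, then $(SCS)$ and a limit; the only real difference is that you obtain boundedness of $(y_k^*)$ directly from $(EX)$, which is available in the finite-dimensional statement, whereas the paper, having only $(GEX)$ in the general setting, needs a slightly longer detour showing first that $(y_k^*(u))_k$ is bounded below. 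Where you genuinely diverge is the necessity half. The paper proves necessity of $(EX)$ by an elementary direct argument (choosing $v_k$ with $y_k^*(v_k)\geq\tfrac12\|y_k^*\|_*$ and testing the subgradient inequality at $z_k=4Mv_k$), and necessity of its condition $(CS)$ by a delicate argument with subnets, $w^*$-compactness and the $\|\cdot\|$-$w^*$-upper semicontinuity of $\partial F$. You instead invoke Fenchel conjugacy: $(EX)$ follows from the standard fact that a closed proper convex function on $\R^n$ is finite everywhere if and only if its conjugate is $1$-coercive, combined with the Fenchel equality $F^*(y_k^*)=\langle y_k,y_k^*\rangle-f(y_k)$; and $(SCS)$ follows from lower semicontinuity of $p\mapsto F(x)+F^*(p)-\langle p,x\rangle$, whose zero set is exactly $\partial F(x)=G(x)$, plus Bolzano--Weierstrass and nearest-point selection in the compact set $G(x)$. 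This duality route is shorter and more transparent in $\R^n$, but it is precisely what does not generalize: in infinite dimensions bounded sequences in $X^*$ need not have norm- (or even $w^*$-sequentially) convergent subsequences, the norm-convergence conclusion of $(SCS)$ must be weakened to the $w^*$-statement $(CS)$, and the finiteness/coercivity equivalence breaks down — which explains why the paper's necessity proofs are structured as they are. Your approach buys a self-contained, more elementary proof of the finite-dimensional theorem; the paper's buys the general theorems from which this one falls out as a corollary.
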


As we mentioned above, condition $(EX)$ was considered in \cite{SchulzSchwartz} in order to ensure the existence of convex extensions $F$ of $f$ such that $G \subseteq \partial F$ on $E.$ It is condition $(SCS)$ which allows us to obtain the exact identity $\partial F= G$ on $E.$ If $E$ is compact, then condition $(EX)$ trivially holds and, assuming that $G$ is upper semicontinuous (which we can fairly do since the subdifferential of a continuous convex function on $\R^n$ is upper semicontinuous), condition $(SCS)$ can be simplified by replacing sequences with points. Recall that if $Y,Z$ are two topological spaces, a set-valued mapping $G:Y \rightrightarrows Z$ is said to be upper semicontinuous on $Y$ provided that for every $y\in Y$ and every open set $W$ in $Z$ containing $G(y),$ there exists a neighbourhood $V$ of $y$ such that $G( V) \subset W$. For any undefined terms in Convex Analysis that we may use in this paper, we refer to the books \cite{BorweinVanderwerffbook, Rockafellar, Zalinescubook}.

\begin{theorem}\label{maintheoremfinitedimensionalcompact}
Let $E \subset \R^n$ be compact, $f: E \to \R$ be a function and $G: E \rightrightarrows \R^n$ be an upper semicontinuous set-valued mapping such that $G(x)$ is compact, convex and nonempty for every $x\in E.$ There exists a convex function $F: \R^n \to \R$ such that $F(x)=f(x)$ and $\partial F(x)= G(x)$ for every $x\in E$ if and only if the following conditions are satisfied.
\item[] $(C):$ $f(x) \geq f(y)+ \langle y^*,x-y \rangle$ for every $x,y\in E, \:y^*\in G(y).$ 
\item[] $(PCS):$ If $x\in E$ and $y \in E, \: y^* \in G(y),$ then
$$
f(x)=f(y)+\langle y^*, x-y \rangle \implies y^* \in  G(x).
$$
\end{theorem}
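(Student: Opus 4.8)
The plan is to derive Theorem~\ref{maintheoremfinitedimensionalcompact} as a special case of Theorem~\ref{maintheoremfinitedimensional}, showing that under the compactness and upper semicontinuity hypotheses, condition $(EX)$ is automatic and conditions $(SCS)$ and $(PCS)$ are equivalent. Since $E$ is compact, any sequence $(y_k)_k \subset E$ is bounded, say $|y_k| \le M$; then for $y_k^* \in G(y_k)$ with $|y_k^*| \to +\infty$, condition $(C)$ applied with a fixed reference point $x_0 \in E$ gives $\langle y_k, y_k^* \rangle - f(y_k) \ge f(x_0) - f(y_0) + \langle y_k^*, y_0\rangle$ type bounds, and more directly $f$ is bounded on $E$ (being the restriction of something we will control) while $\langle y_k, y_k^*\rangle \ge \langle y_k^*, x \rangle - (f(x)-f(y_k))$ for the minimizing structure. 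The cleanest route is: upper semicontinuity of $G$ on the compact set $E$ forces $G(E) = \bigcup_{y \in E} G(y)$ to be bounded, so no sequence $(y_k^*)_k$ with $y_k^* \in G(y_k)$ can have $|y_k^*| \to +\infty$ in the first place, whence $(EX)$ holds vacuously. First I would record this boundedness of $G(E)$ as the opening step, citing upper semicontinuity together with compactness of $E$ and of each value $G(y)$.

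The second and main step is to show that, given $(C)$, condition $(PCS)$ is equivalent to $(SCS)$ under the standing hypotheses. For the direction $(PCS) \Rightarrow (SCS)$, I would take $x \in E$ and sequences $(y_k)_k \subset E$, $(y_k^*)_k$ bounded with $y_k^* \in G(y_k)$ and $f(x) - f(y_k) - \langle y_k^*, x - y_k\rangle \to 0$. By compactness of $E$ and boundedness of $G(E)$, I pass to a subsequence along which $y_k \to y \in E$ and $y_k^* \to z^*$. The goal is to identify the limit: upper semicontinuity of $G$ gives that the limit $z^*$ lies in $G(y)$ (since $(y_k, y_k^*) \in \graph G$ and the graph of an upper semicontinuous compact-valued map into a compact range is closed). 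Passing to the limit in the defining relation and using continuity of $f$ — here I must first confirm $f$ is continuous on $E$, which follows because $f$ is the restriction of a convex, hence locally Lipschitz, function $F$, but in the ``only if'' reductions I should argue from the hypotheses directly, or simply note that for the equivalence of conditions I may assume the extension exists — yields $f(x) = f(y) + \langle z^*, x - y\rangle$ with $z^* \in G(y)$, so $(PCS)$ applies to give $z^* \in G(x)$. Then setting $x_k^* = z^*$ (or a suitable sequence in $G(x)$ converging to $z^*$) gives $|x_k^* - y_k^*| \to 0$ along the subsequence, and a standard subsequence-extraction argument upgrades this to the full sequence.

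For the reverse direction $(SCS) \Rightarrow (PCS)$, I would simply specialize: given $x \in E$ and $y \in E$, $y^* \in G(y)$ with $f(x) = f(y) + \langle y^*, x - y\rangle$, take the constant sequences $y_k = y$ and $y_k^* = y^*$. These satisfy the hypotheses of $(SCS)$ with the limit equal to $0$, so there is $(x_k^*)_k \subset G(x)$ with $|x_k^* - y^*| \to 0$; since $G(x)$ is closed (being compact), the limit $y^* \in G(x)$, which is exactly the conclusion of $(PCS)$.

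I expect the main obstacle to be the continuity of $f$ needed to pass to the limit in the $(PCS) \Rightarrow (SCS)$ direction, together with the precise invocation of closedness of the graph of $G$. The honest way to handle continuity of $f$ is to observe that once we are proving the equivalence of $(SCS)$ and $(PCS)$ as characterizing conditions for the \emph{same} extension problem already governed by Theorem~\ref{maintheoremfinitedimensional}, any $f$ admitting a convex extension is automatically continuous on $E$; and for the reduction it suffices that $(C)$ and upper semicontinuity of $G$ together force the requisite regularity. Thus the cleanest write-up establishes $(EX)$ vacuously, then proves $(SCS) \Leftrightarrow (PCS)$ under $(C)$ and the compactness/upper-semicontinuity hypotheses, and finally quotes Theorem~\ref{maintheoremfinitedimensional} to conclude.
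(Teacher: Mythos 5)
Your reduction is exactly the one the paper intends: the paper itself never writes out a proof of Theorem~\ref{maintheoremfinitedimensionalcompact}, but its introduction sketches precisely this argument (compactness and upper semicontinuity make $(EX)$ vacuous, and allow $(SCS)$ to be replaced by the pointwise condition $(PCS)$), and your two implications --- $(SCS)\Rightarrow(PCS)$ via constant sequences plus closedness of $G(x)$, and $(PCS)\Rightarrow(SCS)$ via subsequence extraction, the closed graph of $G$, and $(PCS)$ applied to the limit point --- are the right skeleton. The gap is your treatment of the continuity of $f$, which you correctly single out as the crux of $(PCS)\Rightarrow(SCS)$ but then leave unresolved. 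Your first suggestion, that ``for the equivalence of conditions I may assume the extension exists,'' is circular: the implication $(PCS)\Rightarrow(SCS)$ is needed precisely in the \emph{if} direction of the theorem, where the existence of $F$ is the conclusion, not a hypothesis. Your fallback claim, that $(C)$ and upper semicontinuity of $G$ ``force the requisite regularity,'' is true but is asserted rather than proved, and it is the one nontrivial point your write-up is missing.

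Fortunately the claim has a two-line proof, so the gap is easily filled. Since $G$ is upper semicontinuous with compact values and $E$ is compact, $G(E)$ is bounded; set $M:=\sup\{|z^*| : z^*\in G(z),\ z\in E\}<\infty$ (this is the same fact you use to make $(EX)$ vacuous). Now apply $(C)$ twice, at $u$ and at $v$: for $u,v\in E$ and any $u^*\in G(u)$, $v^*\in G(v)$ (these sets are nonempty by hypothesis),
$$
\langle u^*, v-u\rangle \;\le\; f(v)-f(u) \;\le\; \langle v^*, v-u\rangle ,
$$
whence $|f(v)-f(u)|\le M|v-u|$, i.e.\ $f$ is $M$-Lipschitz on $E$. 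With this in hand your limiting argument closes correctly: if $\dist(y_{k}^*, G(x))\not\to 0$, pass to a subsequence with $\dist(y_{k_j}^*, G(x))\ge\varepsilon$, extract further so that $y_{k_j}\to y\in E$ and $y_{k_j}^*\to z^*$, note $z^*\in G(y)$ by the closed graph property, use the Lipschitz continuity of $f$ to pass to the limit and obtain $f(x)=f(y)+\langle z^*,x-y\rangle$, invoke $(PCS)$ to get $z^*\in G(x)$, and contradict $\dist(y_{k_j}^*,G(x))\ge\varepsilon$. Everything else in your proposal (the vacuousness of $(EX)$, the constant-sequence argument, the nearest-point choice of $x_k^*$ and the upgrade from subsequences to the full sequence, and the final appeal to Theorem~\ref{maintheoremfinitedimensional}) is correct.
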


However, in the case that $E$ is unbounded (and even if $E$ is closed), condition $(PCS)$ cannot replace $(SCS)$, as the following example shows.
\begin{ex}
{\em Let $g(x, y)=|x|+\theta(x), \: (x,y) \in \R^2,$ where $\theta: \R \to \R$ is a symmetric convex function with $\theta(t)=t^2$ for $t\in [-1,1]$, and $\theta$ affine on  $[1,+\infty).$ Let $E=\{(0,0)\}\cup \{(x, y)\in\R^2 : |x|\geq \min\{1, e^{y}\}\}$, and define $f$ and $G$ on $E$ by
$$
f=g \quad \text{on} \quad E,
$$

$$
G(x,y)=\begin{cases}
\{\nabla g(x,y)\} & \textrm{ if } (x,y) \in E \setminus \lbrace (0,0) \rbrace \\
\{(0,0)\} & \textrm{ if } (x,y)=(0,0).
\end{cases}
$$
Then $G(E)$ is bounded, $G$ is upper semicontinuous and it can be checked that $(f,G)$ satisfies conditions $(C)$ and $(PCS)$ on $E$, but there is no convex function $F:\R^2\to\R$ such that $F=f$ and $\partial F=G$ on $E$, because for every convex function $\varphi:\R^2\to\R$ such that $\varphi=f$ on $E$ we must have $\varphi(x,y)=|x|+\theta(x)$ on $\R^2$, and in particular $\partial \varphi(0,0)=[-1,1]\times\{0\}$. As a matter of fact, for every pair of convex functions $\psi:\R^2\to\R$ and $\eta:\R\to\R$, we have that if $\psi(x,y)=\eta(x)$ for all $(x,y)\in E$ then $\psi(x,y)=\eta(x)$ for all $(x,y)\in\R^2$. This is an easy consequence of \cite[Theorem 1.11]{AzagraMudarraGlobalGeometry}, but next we also provide a direct proof of this assertion for the reader's convenience. We first claim that for every $(x_0, y_0)\in \R^2$ and every $(a,b)\in\partial\psi(x_0, y_0)$ we have $b=0$. Indeed, by convexity we have 
$$
\psi(x,y)\geq \psi(x_0, y_0)+a(x-x_0)+b(y-y_0)
$$
for all $(x,y)\in\R^2$. Taking $(x, y)$ of the form $(x(t), y(t))=(2, t)$, $t\in\R$, and noting that $(2,t)\in E$ for all $t\in\R$, we get
$$
\eta(2)=\psi(2,t)\geq \psi(x_0, y_0)+a(2-x_0)+b(t-y_0)
$$
for all $t\in\R$, which is impossible unless $b=0$. Thus we have that $\partial\psi(x,y)\subset\R\times\{0\}$ for all $(x,y)\in\R^2$, which implies that, for each $x\in\R$, the function $\R\ni y\mapsto \psi(x,y)\in\R$ does not depend on $y$. Since for every $(x,y)\in \R^2$ there exists some $y_0$ with $(x,y_0)\in E$ we deduce that $\psi(x,y)=\psi(x,y_0)=\eta(x)$. Thus $\psi(x,y)=\eta(x)$ for all $(x,y)\in\R^2$.

\medskip

However, note that if we set $G(0,0)=[-1, 1]\times\{0\}$ (instead of just $\{ (0,0)\}$) then the set-valued jet $(f, G)$ does satisfy all the hypotheses of Theorem \ref{maintheoremfinitedimensional}, and therefore there exists a convex function $\psi:\R^2\to\R$ such that $(\psi, \partial\psi)=(f, G)$ on $E$ (of course this function must be $\psi(x,y)=|x|+\theta(x)$).     }
\end{ex}

\medskip

Theorem \ref{maintheoremfinitedimensional} actually is a corollary of the following more general result for functions that are bounded on bounded subsets of a separable Banach space.

\begin{theorem}\label{maintheoremseparable}
Let $E$ be an arbitrary subset of a separable Banach space $X$, let $f: E \to \R$ be a function and let $G: E \rightrightarrows X^*$ be a set-valued mapping such that $G(x)$ is a nonempty convex $w^*$-compact subset of $X^{*}$ for every $x\in E.$ There exists a convex function $F: X \to \R$, bounded on bounded subsets and such that $F(x)=f(x)$ and $\partial F(x)= G(x)$ for every $x\in E$, if and only if the following conditions are satisfied.
\item[] $(C):$ $f(x) \geq f(y)+ y^* (x-y)$ for every $x,y\in E, \:y^*\in G(y).$ 
\item[] $(EX):$ If $(y_k)_k \subset E, \: (y_k^*)_k \subset X^*$ are such that $y_k^* \in G(y_k)$ for every $k$ and $\lim_k \|y_k^*\|_* =+\infty,$ then
$$
\lim_k \frac{ y_k^*(y_k)-f(y_k)}{\|y_k^*\|_*}= +\infty.
$$
\item[] $(SCS):$ If $x\in E$ and $(y_k)_k \subset E, \: (y_k^*)_k \subset X^*$ are such that $y_k^* \in G(y_k)$ for every $k$ and $(y_k^*)_k$ is bounded, then 
$$
\lim_k \left( f(x)-f(y_k)-y_k^* (x-y_k ) \right)=0 \implies  \exists \: (x_k^*)_k \subset G(x) \quad \text{such that} \quad w^*\text{-}\lim_k (y_k^*-x_k^*)=0.
$$ 
\end{theorem}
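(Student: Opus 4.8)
The plan is to prove necessity by Fenchel-conjugate bookkeeping, and to devote the real effort to sufficiency, where the hard point is the inclusion $\partial F(x)\subseteq G(x)$.

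\medskip

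\emph{Necessity.} Assume such an $F$ exists. Condition $(C)$ is immediate from $y^{*}\in G(y)=\partial F(y)$. For $(EX)$, since $F$ is convex and bounded on bounded sets it is continuous, so for each $r>0$ there is $C_{r}$ with $F\le C_{r}$ on $rB_{X}$; hence $F^{*}(p)=\sup_{z}(p(z)-F(z))\ge r\|p\|_{*}-C_{r}$ and therefore $F^{*}(p)/\|p\|_{*}\to+\infty$ as $\|p\|_{*}\to\infty$. As $y^{*}\in\partial F(y)$ forces $F^{*}(y^{*})=y^{*}(y)-f(y)$, this is precisely $(EX)$. For $(SCS)$, observe that $f(x)-f(y_{k})-y_{k}^{*}(x-y_{k})=F(x)+F^{*}(y_{k}^{*})-y_{k}^{*}(x)$ is the Fenchel--Young gap, so the hypothesis says $y_{k}^{*}\in\partial_{\varepsilon_{k}}F(x)$ with $\varepsilon_{k}\to0$. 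If the conclusion failed, then (bounded subsets of $X^{*}$ being $w^{*}$-metrizable, as $X$ is separable) we could pass to a subsequence with $y_{k}^{*}\xrightarrow{w^{*}}y^{*}$ and $y^{*}$ at positive $w^{*}$-distance from $\partial F(x)$; but $w^{*}$-lower semicontinuity of $F^{*}$ gives $F(x)+F^{*}(y^{*})-y^{*}(x)\le0$, i.e. $y^{*}\in\partial F(x)$, a contradiction. Choosing $x_{k}^{*}$ a $w^{*}$-nearest point in the $w^{*}$-compact set $\partial F(x)$ yields $(SCS)$.

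\medskip

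\emph{Sufficiency: construction.} I would define the Schulz--Schwartz-type supremum $F(x)=\sup\{f(y)+y^{*}(x-y):y\in E,\ y^{*}\in G(y)\}$, writing each affine piece as $y^{*}(\cdot)-c(y,y^{*})$ with $c(y,y^{*})=y^{*}(y)-f(y)$. Fixing one base generator $(y_{0},y_{0}^{*})$, condition $(C)$ gives the lower bound $c(y,y^{*})\ge y^{*}(y_{0})-f(y_{0})$, while the sequential statement $(EX)$ is equivalent to the uniform coercivity $c(y,y^{*})/\|y^{*}\|_{*}\to+\infty$ as $\|y^{*}\|_{*}\to\infty$. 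Together these make the supremum finite and bounded on bounded sets (for $\|x\|\le R$, generators with large $\|y^{*}\|_{*}$ contribute values $\le0$), so $F$ is a finite, convex function, bounded on bounded sets and hence locally Lipschitz. Then $(C)$ gives $F=f$ on $E$, and since each affine piece with $y^{*}\in G(x)$ supports $F$ at $x$, we obtain $G(x)\subseteq\partial F(x)$ for every $x\in E$.

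\medskip

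\emph{Sufficiency: the inclusion $\partial F(x)\subseteq G(x)$ (the main obstacle).} Fix $x\in E$, $\xi\in\partial F(x)$, and for $\varepsilon>0$ put $A_{\varepsilon}=\{y^{*}:y\in E,\ y^{*}\in G(y),\ f(x)-f(y)-y^{*}(x-y)<\varepsilon\}$. The first step is the representation
\[
\xi\in\overline{\mathrm{co}}^{\,w^{*}}A_{\varepsilon}\qquad\text{for every }\varepsilon>0 .
\]
I would prove this by a separation/directional-derivative argument: if $\xi\notin\overline{\mathrm{co}}^{\,w^{*}}A_{\varepsilon}$, Hahn--Banach in $(X^{*},w^{*})$ (whose dual is $X$) produces $z\in X$ and $\beta>0$ with $y^{*}(z)\le\xi(z)-\beta$ for all $y^{*}\in A_{\varepsilon}$; computing the right derivative of the convex function $t\mapsto F(x+tz)$ at $t=0$, and using $(EX)$ to force the contribution of the generators with gap $\ge\varepsilon$ to tend to $-\infty$ as $t\to0^{+}$, one gets a right derivative $\le\xi(z)-\beta<\xi(z)$, contradicting $\xi\in\partial F(x)$.

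\medskip

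\emph{Finishing.} The apparent difficulty is that $\overline{\mathrm{co}}^{\,w^{*}}A_{\varepsilon}$ mixes functionals coming from \emph{different} base points $y$, whereas $(SCS)$ governs only individual generators; the resolution is to apply $(SCS)$ \emph{before} taking the convex hull. By $(EX)$, bounded gap forces bounded norm, so $A_{\varepsilon}$ is contained in a fixed ball $MB_{X^{*}}$ for all $\varepsilon\le1$. On that ball $w^{*}$ is metrizable, so the sequential condition $(SCS)$ is equivalent to its uniform form: for every $w^{*}$-neighbourhood $\mathcal U$ of $0$ there is $\delta>0$ with $A_{\delta}\subseteq G(x)+\mathcal U$. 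Taking $\mathcal U$ convex and $w^{*}$-closed, and using that $G(x)$ is convex and $w^{*}$-compact, the set $G(x)+\mathcal U$ is convex and $w^{*}$-closed, whence $\overline{\mathrm{co}}^{\,w^{*}}A_{\delta}\subseteq G(x)+\mathcal U$. Combined with the representation this gives $\xi\in G(x)+\mathcal U$ for every such $\mathcal U$, and since $G(x)$ is $w^{*}$-closed we conclude $\xi\in G(x)$. I expect this last interplay to be the crux: the order of operations is essential, because convexity of $G(x)$ is exactly what keeps the closed convex hull inside $G(x)+\mathcal U$ once every active generator has been pushed there individually.
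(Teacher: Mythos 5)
Your proof is correct, but it takes a genuinely different route from the paper's. The paper does not prove Theorem \ref{maintheoremseparable} directly: it deduces it from the nonseparable Theorem \ref{maingeneraltheorem} (same statement with the weaker condition $(CS)$ in place of $(SCS)$) together with Remark \ref{remarkcsseparablescs}, which shows $(CS)\Leftrightarrow(SCS)$ when $X$ is separable. For necessity, the paper argues by direct convexity estimates (test vectors $z_k=4Mv_k$ for $(EX)$; a subnet argument using the norm-to-$w^*$ upper semicontinuity of $\partial F$ for $(CS)$), whereas you use Fenchel conjugacy: superlinear growth of $F^*$ gives $(EX)$, and the Fenchel--Young gap plus $w^*$-lower semicontinuity of $F^*$ gives $(SCS)$; your version is cleaner and avoids subnets, at the price of using separability explicitly (which is legitimate here). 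For sufficiency, both proofs use the same supremum formula, but the key inclusion $\partial F(x)\subseteq G(x)$ is handled very differently: the paper (Claim \ref{claimsamesubdifferential}) argues by contradiction, separating a putative $\xi\in\partial F(x)\setminus G(x)$ from $G(x)$ by a single direction $u$, choosing near-maximizing generators for $F(x+\tfrac1k u)$, proving they are bounded via $(GEX)$ and asymptotically active, and then invoking $(CS)$ to contradict the separation; you instead prove a Valadier-type representation $\xi\in\overline{\mathrm{co}}^{\,w^*}A_\varepsilon$ in terms of $\varepsilon$-active generators, upgrade the sequential condition $(SCS)$ to the uniform inclusion $A_\delta\subseteq G(x)+\mathcal U$, and close using that $G(x)+\mathcal U$ is convex and $w^*$-closed (compact plus closed). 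Your scheme is more structural, and it is worth noting that your sufficiency argument never actually uses separability: the step ``sequential $\Rightarrow$ uniform'' only needs the choices $\delta=1/k$, not metrizability, so you in fact prove that $(C)+(EX)+(SCS)$ suffices in an arbitrary Banach space --- consistent with, but slightly weaker than, the paper's Theorem \ref{maingeneraltheorem}, since $(SCS)$ implies $(CS)$ while the converse can fail outside the separable setting. That extra generality with the weaker hypothesis $(CS)$ is precisely what the paper's more involved contradiction argument buys.

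Two small points to tighten. In your necessity proof of $(SCS)$, the $w^*$-metric on bounded sets should be taken translation-invariant (e.g.\ $d(p,q)=\sum_n 2^{-n}\min\{1,|(p-q)(z_n)|\}$ with $(z_n)$ dense in $B_X$), so that convergence of nearest-point distances to $0$ really yields $w^*\text{-}\lim_k(y_k^*-x_k^*)=0$; the paper's Remark \ref{remarkcsseparablescs} glosses over the same point. In your representation step, the non-active generators should be split into those with $\|y^*\|_*\leq R_0$, whose difference quotients at $x$ tend to $-\infty$ as $t\to 0^+$ because their values sit at least $\varepsilon$ below $F(x)$, and those with $\|y^*\|_*> R_0$, whose values are uniformly very negative by the uniform form of $(EX)$; your sketch conflates these two effects, but the argument goes through once separated.
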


Without any restriction on the Banach space $X$, but still focusing on the special class of convex functions which are bounded on bounded subsets of $X$, we have the following result.

\begin{theorem}\label{maingeneraltheorem}
Let $X$ be a Banach space, let $E \subset X$ be arbitrary, $f: E \to \R$ be a function and $G: E \rightrightarrows X^*$ be a set-valued mapping such that $G(x)$ is a nonempty convex $w^*$-compact subset of $X^{*}$ for every $x\in E.$ There exists a convex function $F: X \to \R$ that is bounded on bounded subsets and such that $F(x)=f(x)$ and $\partial F(x)= G(x)$ for every $x\in E$ if and only if the following conditions are satisfied.
\item[] $(C):$ $f(x) \geq f(y)+ y^* (x-y)$ for every $x,y\in E, \:y^*\in G(y).$ 
\item[] $(EX):$ If $(y_k)_k \subset E, \: (y_k^*)_k \subset X^*$ are such that $y_k^* \in G(y_k)$ for every $k$ and $\lim_k \|y_k^*\|_* =+\infty,$ then
$$
\lim_k \frac{ y_k^*(y_k)-f(y_k)}{\|y_k^*\|_*}= +\infty.
$$
\item[] $(CS):$ If $x\in E$ and $(y_k)_k \subset E, \: (y_k^*)_k \subset X^*$ are such that $y_k^* \in G(y_k)$ for every $k$ and $(y_k^*)_k$ is bounded, then 
$$
\lim_k \left( f(x)-f(y_k)-y_k^* (x-y_k ) \right)=0 \implies G(x) \cap \overline{\lbrace y_k^* \rbrace_k}^{w^*} \neq \emptyset.
$$

\end{theorem}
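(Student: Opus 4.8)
The plan is to establish necessity by standard convex duality and sufficiency through the Schulz--Schwartz sup-formula combined with a separation argument that activates condition $(CS)$. For necessity, suppose such an $F$ exists. Condition $(C)$ is immediate from the subgradient inequality for $F$. For $(EX)$ I would use the Fenchel conjugate $F^*$: since $F$ is finite, convex and bounded on bounded sets, it is continuous, and the elementary estimate $F^*(p)\ge R\|p\|_*-\sup_{\|z\|\le R}F(z)$ shows that $F^*(p)/\|p\|_*\to+\infty$ as $\|p\|_*\to\infty$; combining this with the Fenchel--Young identity $F^*(y_k^*)=y_k^*(y_k)-f(y_k)$ (valid because $y_k^*\in\partial F(y_k)$) yields $(EX)$ at once. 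For $(CS)$, the hypothesis $f(x)-f(y_k)-y_k^*(x-y_k)\to 0$ rewrites, via Fenchel--Young, as $F(x)+F^*(y_k^*)-y_k^*(x)\to 0$; taking a weak-$*$ cluster point $x^*$ of the bounded sequence $(y_k^*)$ and using that $F^*$ is weak-$*$ lower semicontinuous while $y_k^*(x)\to x^*(x)$ along the subnet, I would get $F(x)+F^*(x^*)-x^*(x)\le 0$, which by Fenchel--Young forces equality and hence $x^*\in\partial F(x)=G(x)$. Since $x^*\in\overline{\{y_k^*\}}^{w^*}$ this gives $(CS)$.

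For sufficiency, assuming $E\ne\emptyset$ (the empty case being trivial), I would define
$$
F(x)=\sup\{\,f(y)+y^*(x-y)\ :\ y\in E,\ y^*\in G(y)\,\},
$$
a convex, norm-lower-semicontinuous function, being a supremum of continuous affine maps. Fixing a base point $x_0\in E$, condition $(C)$ gives $f(y)-y^*(y)\le f(x_0)-y^*(x_0)$, so each affine piece is dominated by $f(x_0)+y^*(x-x_0)$; splitting according to whether $\|y^*\|_*$ is large or bounded and using $(EX)$ to discard the large-slope pieces, I would show that $F$ is finite and bounded on bounded sets, hence continuous and locally Lipschitz. Condition $(C)$ also gives $F(x)\le f(x)$ for $x\in E$, while the choice $y=x$ gives $F(x)\ge f(x)$, so $F=f$ on $E$; and for $y^*\in G(x)$ the single affine piece $f(x)+y^*(\cdot-x)$ lies below $F$ and touches it at $x$, so $y^*\in\partial F(x)$, i.e. $G(x)\subseteq\partial F(x)$.

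The crux is the reverse inclusion $\partial F(x)\subseteq G(x)$ for $x\in E$, and this is where I expect the main difficulty and where $(CS)$ enters. Given $x^*\in\partial F(x)$ with $x^*\notin G(x)$, I would separate: since $G(x)$ is convex and $w^*$-compact and the continuous dual of $(X^*,w^*)$ is $X$, there exist $v\in X$ and $\delta>0$ with $x^*(v)\ge\beta+\delta$, where $\beta=\sup_{g\in G(x)}g(v)$. (It is precisely here that $w^*$-compactness, rather than mere weak compactness, is indispensable, as it guarantees that the separating functional lies in $X$.) Probing along the ray $x+tv$, I would choose, for a sequence $t_k\downarrow 0$, pairs $(y_k,y_k^*)$ with $y_k^*\in G(y_k)$ and
$$
f(y_k)+y_k^*(x+t_kv-y_k)\ge F(x+t_kv)-t_k^2\ge F(x)+t_k(\beta+\delta)-t_k^2,
$$
the second inequality using $x^*\in\partial F(x)$ and $x^*(v)\ge\beta+\delta$. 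Writing $d_k=f(x)-f(y_k)-y_k^*(x-y_k)\ge 0$ (nonnegative by $(C)$), this yields both $y_k^*(v)\ge\beta+\delta-t_k$ and $d_k\le t_k\big(y_k^*(v)-(\beta+\delta)\big)+t_k^2$. A routine local-Lipschitz estimate for $F$ near $x$ bounds $(y_k^*)$, whence $d_k\to 0$, so the hypotheses of $(CS)$ hold. Applying $(CS)$ to a tail with $t_N<\delta$, all of whose slopes satisfy $y_k^*(v)\ge\beta+\delta-t_N>\beta$ (a $w^*$-closed condition preserved under $w^*$-closure), produces $g^*\in G(x)\cap\overline{\{y_k^*\}_{k\ge N}}^{w^*}$ with $g^*(v)\ge\beta+\delta-t_N>\beta$, contradicting $g^*(v)\le\beta$. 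Hence $x^*\in G(x)$, which completes the proof.
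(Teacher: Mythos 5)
Your proposal is correct, and while your sufficiency argument follows essentially the same route as the paper's, your necessity argument is genuinely different. For sufficiency you use the paper's extremal formula $F(x)=\sup\lbrace f(y)+y^*(x-y) : y\in E,\ y^*\in G(y)\rbrace$ and the same scheme for the key inclusion $\partial F(x)\subseteq G(x)$: separate $x^*\in\partial F(x)\setminus G(x)$ from the $w^*$-compact convex set $G(x)$ by a vector $v\in X$, probe $F$ along the ray $x+t_kv$ to obtain near-maximizing pairs $(y_k,y_k^*)$, check that $(y_k^*)$ is bounded and that $f(x)-f(y_k)-y_k^*(x-y_k)\to 0$, and let $(CS)$ produce the contradiction. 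The only variations are technical: you bound $(y_k^*)$ via the local Lipschitz property of $F$, where the paper uses $(GEX)$ (so that its argument covers Theorem~\ref{mainmoregeneraltheorem} simultaneously), and your device of applying $(CS)$ to a tail $\lbrace y_k^*\rbrace_{k\geq N}$ lying in the $w^*$-closed half-space $\lbrace p\in X^*: p(v)\geq\beta+\delta-t_N\rbrace$, whose $w^*$-closure stays in that half-space, replaces the paper's explicit $\varepsilon$-estimates---arguably a cleaner finish. For necessity, by contrast, the paper argues directly: it proves $(EX)$ by testing the subgradient inequality against norming vectors $v_k$, and proves $(CS)$ by a fairly delicate argument with $w^*$-convergent subnets, Hahn--Banach separation, probing at $x+\sqrt{r_\alpha}\,v$, and the norm-to-$w^*$ upper semicontinuity of $\partial F$ quoted from \cite{BorweinVanderwerffbook}. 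You instead derive both conditions from Fenchel conjugacy: the coercivity bound $F^*(p)\geq R\|p\|_*-\sup_{\|z\|\leq R}F(z)$ together with the Fenchel--Young equality $F^*(y_k^*)=y_k^*(y_k)-f(y_k)$ yields $(EX)$ at once, and the $w^*$-lower semicontinuity of $F^*$ plus Banach--Alaoglu yields $(CS)$, since along a $w^*$-convergent subnet the relation $F(x)+F^*(y_k^*)-y_k^*(x)\to 0$ forces any cluster point $x^*$ to satisfy Fenchel--Young with equality, i.e.\ $x^*\in\partial F(x)=G(x)$. This duality route is shorter, avoids upper semicontinuity of the subdifferential altogether, and---since it nowhere uses boundedness on bounded sets---would equally prove the necessity of $(CS)$ in Theorem~\ref{mainmoregeneraltheorem}. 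One minor point to flesh out when writing this up: your claim that $F$ is bounded on bounded sets requires upgrading the sequential condition $(EX)$ to a uniform statement (for each $R$ there is $K_R$ such that $y^*(y)-f(y)\geq (R+1)\|y^*\|_*$ whenever $y\in E$, $y^*\in G(y)$ and $\|y^*\|_*>K_R$); this is the routine contradiction argument implicit in your ``splitting'' step, and is easily completed.
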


Finally, a complete solution to our problem for functions that are not necessarily bounded on bounded sets is given in the following theorem.

\begin{theorem}\label{mainmoregeneraltheorem}
Let $X$ be a Banach space, let $E \subset X$ be arbitrary, $f: E \to \R$ be a function and $G: E \rightrightarrows X^*$ be a set-valued mapping such that $G(x)$ is a nonempty convex $w^*$-compact subset of $X^{*}$ for every $x\in E.$ There exists a continuous convex function $F: X \to \R$ such that $F(x)=f(x)$ and $\partial F(x)= G(x)$ for every $x\in E$ if and only if the following conditions are satisfied.
\item[] $(C):$ $f(x) \geq f(y)+ y^* (x-y)$ for every $x,y\in E, \:y^*\in G(y).$ 
\item[] $(GEX):$ If $(y_k)_k \subset E, \: (y_k^*)_k \subset X^*$ are such that $y_k^* \in G(y_k)$ for every $k$ and $\lim_k \|y_k^*\|_* =+\infty,$ then
$$
\lim_k \left( y_k^*(y_k-x)-f(y_k) \right)= +\infty \quad \text{for every} \quad x\in X.
$$
\item[] $(CS):$ If $x\in E$ and $(y_k)_k \subset E, \: (y_k^*)_k \subset X^*$ are such that $y_k^* \in G(y_k)$ for every $k$ and $(y_k^*)_k$ is bounded, then 
$$
\lim_k \left( f(x)-f(y_k)-y_k^* (x-y_k ) \right)=0 \implies G(x) \cap \overline{\lbrace y_k^* \rbrace_k}^{w^*} \neq \emptyset.
$$

\end{theorem}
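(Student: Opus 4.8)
The plan is to prove both implications, treating this statement as the ``finite everywhere'' analogue of Theorem~\ref{maingeneraltheorem}: the only structural change is that $(GEX)$ will guarantee finiteness (hence continuity) of the natural extension, exactly where $(EX)$ previously guaranteed boundedness on bounded sets.

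For necessity, suppose $F$ is a continuous convex function with $F=f$ and $\partial F=G$ on $E$. Condition $(C)$ is immediate from the subgradient inequality $F(x)\geq F(y)+y^*(x-y)$ applied to $y^*\in G(y)=\partial F(y)$. For $(GEX)$ I will argue by contradiction: if it failed, there would be $x_0\in X$, $M\in\R$ and a subsequence with $y_k^*(y_k-x_0)-f(y_k)\leq M$ while $\|y_k^*\|_*\to\infty$. Combining $f(y_k)\geq y_k^*(y_k-x_0)-M$ with $F(z)\geq f(y_k)+y_k^*(z-y_k)$ yields $y_k^*(w)\leq M+F(x_0+w)<\infty$ for every $w\in X$, so $\sup_k y_k^*(w)<\infty$ pointwise; the Banach--Steinhaus theorem then forces $\sup_k\|y_k^*\|_*<\infty$, a contradiction. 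This Banach--Steinhaus step, using only that $F$ is finite everywhere, is what replaces the boundedness-on-bounded-sets estimate behind $(EX)$. Finally $(CS)$ follows by a direct limiting argument: given a bounded $(y_k^*)$ with $\lim_k\big(f(x)-f(y_k)-y_k^*(x-y_k)\big)=0$, pick a $w^*$-cluster point $z^*$ of $(y_k^*)$ and a subnet converging to it in the $w^*$-topology; passing to the limit in $F(u)\geq F(y_\alpha)+y_\alpha^*(u-x)+y_\alpha^*(x-y_\alpha)$ and noting $F(y_\alpha)+y_\alpha^*(x-y_\alpha)\to F(x)$ shows $z^*\in\partial F(x)=G(x)$, so $G(x)\cap\overline{\{y_k^*\}_k}^{w^*}\neq\emptyset$.

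For sufficiency I will use the sup-envelope
$$
F_0(x)=\sup\{f(y)+y^*(x-y):y\in E,\ y^*\in G(y)\},\qquad x\in X,
$$
which is convex and lower semicontinuous as a supremum of affine functions. Condition $(C)$ gives $F_0(x)\leq f(x)$ on $E$ and, taking $y=x$, also $F_0(x)\geq f(x)$, so $F_0=f$ on $E$; each affine piece shows $G(y)\subseteq\partial F_0(y)$ for $y\in E$. The first genuine point is finiteness of $F_0$, i.e.\ $\inf\{y^*(y-x)-f(y)\}>-\infty$: I will split the index set according to $\|y^*\|_*$, applying $(GEX)$ on the part where $\|y^*\|_*$ is large and, on the bounded part, comparing with a fixed base point $x_1\in E$ via $(C)$ to obtain the lower bound $y^*(x_1-x)-f(x_1)$. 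Hence $F_0$ is finite, and a finite lower semicontinuous convex function on a Banach space is continuous, so $F_0$ is continuous and locally Lipschitz.

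It remains to prove the reverse inclusion $\partial F_0(y_0)\subseteq G(y_0)$ for $y_0\in E$, which I expect to be the main obstacle. Since $G(y_0)$ and $\partial F_0(y_0)$ are both nonempty convex $w^*$-compact, it suffices to match their support functions, i.e.\ to show $\sigma_{\partial F_0(y_0)}(v)\leq\sigma_{G(y_0)}(v)$ for every $v\in X$, the opposite inequality being clear from $G\subseteq\partial F_0$. Fixing $v$, I will choose near-maximizers $(w_t,w_t^*)\in\mathrm{graph}\,G$ of $F_0(y_0+tv)$ with tolerance $t^2$; using the local Lipschitz constant $L$ of $F_0$ near $y_0$ one gets the uniform bound $\|w_t^*\|_*\leq L+1$ together with $\delta_t:=f(y_0)-f(w_t)-w_t^*(y_0-w_t)\to0$ as $t\downarrow0$, while near-maximality yields $w_t^*(v)\geq\frac{F_0(y_0+tv)-f(y_0)}{t}-t\to F_0'(y_0;v)=\sigma_{\partial F_0(y_0)}(v)$. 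Applying $(CS)$ with $x=y_0$ to the bounded sequence $(w_{t_n},w_{t_n}^*)$ for any $t_n\downarrow0$ produces $\zeta^*\in G(y_0)$ that is a $w^*$-cluster point of $(w_{t_n}^*)$; since $\zeta^*(v)\geq\liminf_n w_{t_n}^*(v)\geq\sigma_{\partial F_0(y_0)}(v)$, we get $\sigma_{G(y_0)}(v)\geq\sigma_{\partial F_0(y_0)}(v)$. Thus the support functions agree, $\partial F_0(y_0)=G(y_0)$, and $F=F_0$ is the desired extension. The delicate point is the control of the near-maximizers: deriving $\|w_t^*\|_*\leq L+1$ and $\delta_t\to0$ simultaneously, without circularity, is where both the local Lipschitz geometry of $F_0$ and the precise formulation of $(CS)$ as a statement about bounded ``almost-active'' sequences are essential.
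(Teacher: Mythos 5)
Your proposal is correct in overall structure and, at several points, takes a genuinely different route from the paper's. For the necessity of $(GEX)$, the paper picks $v_k$ with $y_k^*(v_k)\geq\tfrac12\|y_k^*\|_*$, sets $z_k=x+\tfrac{4M}{\|y_k^*\|_*}v_k\to x$ and uses continuity of $F$ at $x$; your Banach--Steinhaus argument (the pointwise bounds $y_k^*(w)\leq M+F(x_0+w)$ plus uniform boundedness) is shorter and equally valid. The difference is more striking for the necessity of $(CS)$: the paper assumes the conclusion fails, separates a $w^*$-subnet limit $\xi$ from $\partial F(x)$ by Hahn--Banach, and then runs a $\sqrt{r_\alpha}$-perturbation argument invoking the norm-to-$w^*$ upper semicontinuity of $\partial F$; you instead pass to the limit along the $w^*$-convergent subnet directly in the subgradient inequality $F(u)\geq\bigl[F(y_\alpha)+y_\alpha^*(x-y_\alpha)\bigr]+y_\alpha^*(u-x)$, whose bracketed term tends to $F(x)$ by hypothesis, obtaining $z^*\in\partial F(x)$ at once. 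This is a correct and substantially simpler argument than the one in the paper. On the sufficiency side, both proofs use the same minimal envelope and the same finiteness/continuity/$G\subseteq\partial F_0$ steps; for the reverse inclusion, the paper argues by contradiction with a separating vector $u$ and near-maximizers at $x+\tfrac1k u$ (establishing boundedness of the chosen functionals via $(GEX)$), whereas you compare support functions in every direction $v$, using the max formula $\sigma_{\partial F_0(y_0)}(v)=F_0'(y_0;v)$ and deriving $\|w_t^*\|_*\leq L+1$ from the local Lipschitz constant of $F_0$; your derivation of this bound and of $\delta_t\to 0$ is sound and not circular. The underlying mechanism (near-maximizing sequences fed into $(CS)$) is the same, but your formulation avoids the separation step.

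There is, however, one step you must repair. Condition $(CS)$ yields a point $\zeta^*\in G(y_0)\cap\overline{\{w_{t_n}^*\}_n}^{w^*}$, i.e.\ a point of the $w^*$-closure of the \emph{set} of values; this is strictly weaker than being a $w^*$-cluster point of the \emph{sequence}, and your inequality $\zeta^*(v)\geq\liminf_n w_{t_n}^*(v)$ lives exactly in the gap between the two notions: $\zeta^*$ could coincide with an isolated early term $w_{t_{n_0}}^*$, in which case all you know is $\zeta^*(v)=w_{t_{n_0}}^*(v)$, which may be strictly below the liminf. (This distinction between set-closure and sequential cluster points is precisely why $(CS)$, rather than $(SCS)$, is the right condition in nonseparable spaces, so it should not be blurred here.) Fortunately the fix is immediate from your own estimates: since difference quotients of a convex function decrease as $t\downarrow 0$, you have $w_{t_n}^*(v)\geq F_0'(y_0;v)-t_n$ for \emph{every} $n$, hence \emph{any} point of the closure satisfies $\zeta^*(v)\geq F_0'(y_0;v)-t_1$; running the construction with $t_1$ arbitrarily small (equivalently, applying $(CS)$ to the tails $(w_{t_n},w_{t_n}^*)_{n\geq N}$ and letting $N\to\infty$) gives $\sigma_{G(y_0)}(v)\geq F_0'(y_0;v)$ as desired. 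It is worth noting that the paper's own proof of Claim~\ref{claimsamesubdifferential} glosses over the same point when it selects $k=k_\varepsilon$ with both $|(y_k^*-x_0^*)(u)|\leq\varepsilon$ and $k2^{-k}\leq\varepsilon$: such a $k$ need not exist if $x_0^*$ is an isolated term of the sequence, and the same tail argument repairs it.
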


\medskip

In fact, we will see that in the preceding theorems, an extension $F$ is given by the formula
$$
F(x)= \sup \lbrace f(y)+ y^*(x-y) \: : \: y\in E, \: y^*\in G(y) \rbrace,\quad x\in X.
$$
This extension has the property that, for every continuous convex function $H$ on $X$ such that $H=f$ and $\partial H \supset G$ on $E,$ we have $F \leq H$ on $X.$ Therefore $F$ is the minimal continuous convex extension of the datum $(f,G):E \to \R \times 2^{X^*}.$ 

\medskip

In order to help the reader get acquainted more quickly with the conditions of the above theorems, let us say a few words as to why we chose to label them $(C)$, $(EX)$, etc. Condition $(C)$ refers to {\em convexity} of the set-valued $1$-jet, meaning that all the function data lie above each of the putative supporting hyperplanes arising from the subdifferential data. Condition $(EX)$ and $(GEX)$ are related to the {\em existence} of at least one continuous convex function $F:X\to\R$ satisfying $F(x)=f(x)$ and $G(x)\subseteq \partial F(x)$ for all $x\in E$. In finite dimensions we only need the existence condition $(EX)$, but in the infinite-dimensional setting a {\em generalized existence condition} such as $(GEX)$ becomes necessary. These conditions are not sufficient to ensure the equality $G(x)=\partial F(x)$ for all $x\in E$, and that is why we need to introduce other conditions like $(CS)$, $(SCS)$, $(PCS)$,  which refer to how {\em convexity} forces {\em subdifferentials} to behave, both asymptotically and pointwise. For instance, $(PCS)$ tells us that in order that a convex extension $F$ with prescribed subdifferential $\partial F=G$ exists, the given data must satisfy the property that if a point $(x, f(x))$ of the graph of $f$ is touched by a putative supporting hyperplane at some other point $y$, then that hyperplane must also be one of the putative hyperplanes of $F$ at $x$. When $E$ is not compact or $G$ is not upper semicontinuous this easy {\em pointwise} condition is no longer sufficient, and even in the finite-dimensional situation we must consider sequences instead of points, as in condition $(SCS)$, which tells us that if a sequence of putative hyperplanes at points $(y_k)$ asymptotically touches a point $(x,f(x))$ in the graph of $f$ then there is a corresponding sequence of putative supporting hyperplanes at $x$ with a similar asymptotic behavior. In the nonseparable case the situation becomes even more complicated, and we need condition $(CS)$ in place of the weaker condition $(SCS)$. 

\medskip

Let us finish this introduction by examining three variations of an example of Borwein, Montesinos and Vanderwerff \cite[Example 4.2]{BorweinMontesinosVanderwerff} in the light of Theorem \ref{mainmoregeneraltheorem}. We wish to use these examples to illustrate the role of conditions $(GEX)$, $(EX)$ and $(CS)$ in an infinite-dimensional setting.

\begin{example}{\em $(1)$ Let $E=B_{\ell_2}$ be the open unit ball of $\ell ^2$. The function $f:E\to \mathbb{R}$ defined by 
$$
f(x)=\sum_{n=1}^{\infty}x_n^{2n}, \quad x=(x_n)_{n\geq 1} \in E,
$$
is convex, bounded and differentiable (in fact real-analytic). The same formula defines a real-analytic convex function on $c_0$, so in particular $f$ can be extended to a continuous convex function $F$ on $c_0$ with $\partial F = \partial f$ on $E$, and hence $(f, \partial f)$ satisfies condition $(GEX)$ in Theorem \ref{mainmoregeneraltheorem} with $X=c_0$. However $f$ has no continuous convex extension from $B_{\ell^2}$ to all of $\ell ^{\infty}$. Indeed, let us assume that there exists a continuous convex function $F$ on $\ell^\infty$ with $F=f$ on $E.$ Then $(f, \partial F)$ must satisfy condition $(GEX)$ of Theorem \ref{mainmoregeneraltheorem} on the set $E$ with $X=\ell^\infty.$ For every $k \in \N $, define $y_k:=r_ke_k$, where $r_k$ is such that $\frac{3}{4}=r_k^{2k-1}$, and let $y^*_k\in \partial F(y_k)\subset\ell^{\infty}$. Then $y_k \in E$ and, since $F=f$ on $E,$ it is clear that the linear functionals $y^*_k$ and $Df(y_k)=2kr_k^{2k-1}e_k^*$ coincide on $\ell^2$. Moreover, by the density of $\ell^2$ in $c_0$ and the continuity of $y_k^{*}$, $e_k^{*}$ with respect to the norm $\|\cdot\|_{\infty}$, the linear functionals $y^*_k$ and $2kr_k^{2k-1}e_k^*$ must also agree on $c_0$. That is, we have
\begin{equation}\label{functionalsareequalc0}
y_k^*=2kr_k^{2k-1}e_k^* = \tfrac{3}{2} k e_k^* \quad \text{on} \: \: c_0, \quad \text{for every} \quad k\in \N.
\end{equation}
Because $F$ is continuous at $0$, there exists $\delta>0$ such that $F(x)\leq \frac{3}{4}$ whenever $\|x\|_{\infty}\leq \delta$. Then
$$
\tfrac{3}{4}\geq F(x)\geq f(y_k)+y_k^*(x-y_k)=r_k^{2k}+y^*_k(x)-2kr_k^{2k} \quad \text{whenever} \quad \|x\|_{\infty}\leq \delta.
$$
This implies that 
$$
\|y_k^{*}\|_*\leq \delta^{-1} \left( \tfrac{3}{4} +(2k-1) r_k^{2k} \right) =  \tfrac{3}{4} \delta^{-1} \left( 1 +(2k-1) r_k \right) \leq \frac{3k}{2\delta} \quad \text{for every} \quad k\in \N.
$$
The preceding inequality together with \eqref{functionalsareequalc0} yield
\begin{equation}\label{estimationnorm}
\tfrac{3}{2}k \leq  \|y_k^{*}\|_*\leq \tfrac{3k}{2\delta} \quad \text{for every} \quad k\in \N. 
\end{equation}
If for every $x\in \ell^\infty,$ the sequence $\left( \frac{2}{3}k^{-1} y_k^*(x) \right)_k$ converges to $0,$ then, using \eqref{functionalsareequalc0} and \eqref{estimationnorm}, we deduce that the mapping $ \ell^\infty \ni x \mapsto  \left( \frac{2}{3}k^{-1} y_k^*(x) \right)_k$ defines a bounded linear projection onto $c_0,$ which is absurd since $c_0$ is not complemented in $\ell^\infty.$ Thus there exist $x\in \ell ^\infty$ and a subsequence $(k_n)_n$ 
such that $ \frac{2}{3}k_n^{-1}y_{k_n}^{*}(x)\geq 1$ for all $n$. Bearing in mind the first inequality in \eqref{estimationnorm} we have
$$
     y_{k_n}^*(y_{k_n}-x)-f(y_{k_n})\leq 2k_nr_{k_n}^{2k_n}-\tfrac{3}{2}k_n-r_{k_n}^{2k_n}=
     \tfrac{3}{2}k_nr_{k_n}-\tfrac{3}{2}k_n-\tfrac{3}{4}r_{k_n}
     \leq 0 \quad \text{for every} \quad n.
$$
This shows that condition $(GEX)$ of Theorem \ref{mainmoregeneraltheorem} with $X=\ell ^{\infty}$ fails for $(f, \partial F)$ on $E,$ a contradiction. We conclude that there is no continuous convex extension of $f$ to $\ell^\infty.$ 

\medskip
$(2)$ If $E$ and $f$ are as in $(1),$ there is no convex function $F:\ell^2 \to \R$ which is bounded on bounded subsets and such that $F=f$ on $E.$ Indeed, let $F$ be a continuous convex function $F: \ell^2 \to \R$ with $F=f$ on $E.$ We learnt from $(1)$ that $F$ must satisfy $y_k^*= \frac{3}{2}ke_k^*$ on $\ell_2$ for every $y_k^*\in \partial F(y_k)$ and every $k.$ We have that $\lim_k \|y_k^*\|_*= \infty$ while
$$
\lim_k \frac{y_k^*(y_k)-f(y_k)}{\|y_k^*\|_*}= \lim_k \frac{2kr_k^{2k}-r_k^{2k}}{2k r_k^{2k-1}} = \lim_k \frac{2k-1}{2k} r_k = 1.
$$
This shows that condition $(EX)$ of Theorem \ref{maingeneraltheorem} is not fulfilled for $(F, \partial F)$ and therefore $F$ is not bounded on bounded subsets.

\medskip

$(3)$ Let $r\in (0,1)$ and set $E=r B_{\ell_2},$ $X=\ell^\infty$. Define $f:E \to \R,$ $G:E \rightrightarrows X^*$ by
$$
f(x)= \sum_{n=1}^\infty x_n^{2n}, \quad G(x)=\left\lbrace g(x):=\sum_{n=1}^\infty 2n x_n^{2n-1} e_n^* \right\rbrace, \quad x=(x_n)_{n\geq 1} \in E.
$$
Then $f$ admits a Lipschitz convex extension $F$ to all of $X$ such that $\partial F= G$ on $E.$ Indeed, an easy calculation shows that $\| g(x)\|_* \leq \sum_{n = 1}^\infty r^{2n-1} = 2r (1-r^2)^{-2}$ for every $x\in E.$ Besides, $(f,G)$ satisfies condition $(C)$ on $E$ and thus the formula
$$
F(x)= \sup_{y\in E} \lbrace f(y)+g(y)(x-y) \rbrace, \quad x\in X,
$$
defines a Lipschitz convex function with $F=f$ on $E$ and $g(x) \in \partial F(x)$ for every $x\in E.$ In order to see that $\partial F= G$ on $E,$ let us check that $(f,G)$ satisfies condition $(CS)$ of Theorem \ref{maingeneraltheorem}. Assume that $x=(x_n)_n \in E$, $ \left( y_k=(y_n^k)_n \right)_k\subset  E$, and $y_k^*=g(y_k)$ are such that
$$
\lim_k\sum_{n=1}^{\infty} \left( x_n^{2n}-(y^k_n)^{2n}-2n(y^k_n)^{2n-1}(x_n-y^k_n) \right)=\lim_k \left( f(x)-f(y_k)-g(y_k)(x-y_k) \right)=0.
$$
This implies that
$$
\lim_k \left( \varphi_n(x_n)-\varphi_n(y_n^k)-\varphi_n'(y_n^k)(x_n-y_n^k) \right)=0, \quad \text{where} \quad \varphi_n(t)=t^{2n}, \quad t\in \R, \: n\in \N.
$$
Since each $\varphi_n : \R \to \R$ is strictly convex we must have $\lim_k y_n^k=x_n$ for every $n\in \N.$ Now observe that 
$$
2n \, \big | (y^k_n)^{2n-1}-x_n^{2n-1} \big | \leq 4nr^{2n-1} \quad \text{for every} \quad k\in \N, \quad \text{where} \quad \sum_{n=1}^\infty 4nr^{2n-1}<\infty.
$$ 
Thus, for every $u\in X,$ the Dominated Convergence Theorem gives us
$$
\lim_k |(y_k^*-g(x))(u)| \leq  \lim_k \left( \sum_{n=1}^{\infty} 2n \, \big |(y^k_n)^{2n-1}-x_n^{2n-1} \big | \right) \|u\|_\infty=\|u\|_\infty \sum_{n=1}^{\infty}\lim_k 2n \, \big |(y^k_n)^{2n-1}-x_n^{2n-1} \big |=0,
$$
which shows that $g(x) \in \overline{\lbrace g(y_k) \rbrace_k}^{w^*}.$ Thus condition $(CS)$ of Theorem \ref{maingeneraltheorem} is satisfied for $(f,G)$ on $E.$ 
}
\end{example}

\section{Proofs of theorems \ref{mainmoregeneraltheorem}, \ref{maingeneraltheorem} and \ref{maintheoremseparable}}\label{sectiongeneraltheorem}

Throughout this section $\big ( X,\| \cdot \| \big )$ will be a Banach space and $\|\cdot \|_*$ will denote the dual norm on $X^*.$ We will start with the proof of the most general results of the paper, that is, Theorems \ref{mainmoregeneraltheorem} and \ref{maingeneraltheorem}. Then, a small observation (see Remark \ref{remarkcsseparablescs} below) will allow us to deduce Theorem \ref{maintheoremseparable} for separable spaces.

\subsection{Theorems \ref{mainmoregeneraltheorem} and \ref{maingeneraltheorem}. Only if part.} Let $F: X \to \R$ be a continuous convex function. We obviously have $F(x) \geq F(y)+  y^* (x-y)$ for every $x,y \in X, \: y^* \in \partial F(y).$ Thus condition $(C)$ is necessary in Theorems \ref{mainmoregeneraltheorem} and \ref{maingeneraltheorem}. 

\medskip

Let us now prove that $(GEX)$ is a necessary condition in Theorem \ref{mainmoregeneraltheorem}. Let $x\in X$ and let $(y_k)_k \subset X, \: (y_k^*)_k \subset X^*$ with $y_k^* \in \partial F(y_k)$ and $\lim_k \| y_k^*\|_* = +\infty.$ Assume, for the sake of contradiction, that $\lim_k (y_k^*(y_k-x)-F(y_k)) \neq + \infty.$ Then, after passing to a subsequence, we may find $M>\max\lbrace F(x), 0 \rbrace$ such that
\begin{equation}\label{inequalitynecessitycondition}
y_k^*(y_k-x) - F(y_k) \leq M \quad \text{for every} \quad k.
\end{equation}
Now we consider, for every $k,$ a vector $v_k \in X$ with
\begin{equation}\label{propertiesvectorvknecessity}
\| v_k\| \leq 1 \quad \text{and} \quad y_k^*(v_k) \geq \frac{1}{2}\|y_k^*\|_*.
\end{equation}
If we define $z_k = x+\frac{4M}{\| y_k^*\|_*}v_k,$ then we get
$$
0 \leq F(z_k)-F(y_k)+ y_k^*( y_k- z_k) \leq F(z_k)+ M  - \frac{4M}{\|y_k^*\|_*}y_k^*(v_k) \leq F(z_k)-M,
$$
where the first inequality follows from the convexity of $F,$ the second one from \eqref{inequalitynecessitycondition}, and the third one from \eqref{propertiesvectorvknecessity}. Since $(z_k)_k$ converges to $x,$ the continuity of $F$ at $x$ gives that $M \leq \lim_k F(z_k) = F(x),$ contradicting the choice of $M.$

\medskip

In order to see that $(EX)$ is necessary in Theorem \ref{maingeneraltheorem}, assume further that $F$ is bounded on bounded subsets. Suppose that $(y_k^*)_k $ is a sequence such that $\lim_k \|y_k^*\|_* = +\infty$ and $y_k^* \in \partial F(y_k)$ for every $k$ but there exists $M>0$ with
\begin{equation}\label{inequalitynecessityconditionbboundedsets}
 y_k^*(y_k)-F(y_k) \leq M \|y_k^*\|_* \quad \text{for every} \quad k.
\end{equation}
Consider, for every $k,$ a vector $v_k \in X$ with 
\begin{equation}\label{propertiesvectorvknecessitybboundedsets}
\| v_k \| \leq 1 \quad \text{and} \quad y_k^*(v_k) \geq \frac{1}{2}\| y_k^*\|_*.
\end{equation} 
If we define $z_k= 4M v_k,$ we can write
\begin{align*}
0 & \leq F(z_k)-F(y_k)-y_k^* (z_k-y_k) =F(z_k)+ \big( y_k^*(y_k)-F(y_k) \big)-y_k^*(z_k) \\
& \leq  F(z_k)+M\|y_k^*\|_* -2 M \|y_k^*\|_*= F(z_k)-M\|y_k^*\|_*,
\end{align*}
where the first inequality follows from the fact that $y_k^* \in \partial F(y_k),$ and the second one from \eqref{inequalitynecessityconditionbboundedsets} and \eqref{propertiesvectorvknecessitybboundedsets}. Since $(F(z_k))_k$ is a bounded sequence and $\lim_k \| y_k^*\|= \infty,$ the last chain of inequalities yields a contradiction. This proves that $\lim_k \frac{y_k^*(y_k)-f(y_k)}{\|y_k^*\|_*}= +\infty.$

\medskip

Finally, in order to show that $(CS)$ is a necessary condition in Theorems \ref{mainmoregeneraltheorem} and \ref{maingeneraltheorem}, let us first prove the following fact.

\begin{fact}\label{factCScompact}
{\em Let $h: X \to \R$ be a continuous convex function and let $x,y\in X$ be two points such that $h(x)=h(y)+y^*(x-y)$ for some $y^*\in \partial h(y).$ Then $y^* \in \partial h(x).$ }
\end{fact}  
\begin{proof}
Because $y^*\in \partial h(y),$ we can write, for every $z \in X$,
$$
h(z) \geq h(y)+ y^*(z-y) = h(y)+y^*(x-y) + y^*(z-x) = h(x) + y^*(z-x),
$$
that is $y^* \in \partial h(x). $
\end{proof}

Now assume, seeking a contradiction, that $(CS)$ is not satisfied. Then we can find $ x\in X$ and sequences $(y_k)_k, \: (y_k^*)_k$ such that $y_k^* \in  \partial F(y_k), \: (y_k^*)_k$ is bounded and
\begin{equation}\label{limitequal0necessity}
\lim_k \left( F(x)-F(y_k)- y_k^* (x-y_k) \right)=0,
\end{equation}
but $ \partial F(x) \cap \overline{\lbrace y_k^* \rbrace_k}^{w^*} =\emptyset.$ Since $( y_k^* )_k$ is bounded, we can find a subnet $( y_{k_\alpha}^* )_{\alpha \in D}$ of $( y_k^* )_k$ $w^{*}$-convergent to $\xi \in X^*.$ Obviously, $\xi$ does not belong to $\partial F(x)$ and, since $\partial F(x)$ is $w^{*}$-closed, we can apply the Hahn-Banach Theorem for $(X^*,w^*)$ in order to find $v\in X$ with 
\begin{equation}\label{separatingv}
\xi (v) > \sup_{x^* \in \partial F(x)}  x^* (v).
\end{equation}
For every $\alpha \in D,$ the number $r_\alpha = F(x)-F(y_{k_\alpha})-y_{k_\alpha}^*(x-y_{k_\alpha})$ is strictly positive (as otherwise, $y_{k_\alpha}^* \in \partial F(x)$ by Fact \ref{factCScompact}) and $ \lim_\alpha r_\alpha = 0$ by \eqref{limitequal0necessity}. We now write
\begin{align}\label{estimationbiggernecessity}
F( x+ \sqrt{r_\alpha} v ) -F(x) & \geq F(y_{k_\alpha})+ y_{k_\alpha}^* (x+ \sqrt{r_\alpha} v-y_{k_\alpha}) -F(x) = -r_\alpha +y_{k_\alpha}^* (v) \sqrt{r_\alpha}   \\ 
& = -r_\alpha + (y_{k_\alpha}^*-\xi) (v) \sqrt{r_\alpha} + \xi (v) \sqrt{r_\alpha}, \nonumber
\end{align}
for every $\alpha \in D.$ Let us consider a net $(z_\alpha^*)_{\alpha \in D}$ such that $z_\alpha^* \in \partial F (x+ \sqrt{r_\alpha} v )$ for each $\alpha.$ Since the net $\lbrace x+ \sqrt{r_\alpha} v \rbrace_{\alpha \in D}$ strongly converges to $x,$ for any $\varepsilon>0,$ the $\|\cdot\|$-$w^{*}$-upper semicontinuity of $\partial F$ (see \cite[Proposition 6.1.1]{BorweinVanderwerffbook}) gives $\alpha_\varepsilon \in D$ such that for every $\alpha\in D$ with $\alpha_\varepsilon \leq \alpha,$ we can find $x^*_{\varepsilon,\alpha} \in \partial F(x)$ with $| (z_\alpha^*-x^*_{\varepsilon,\alpha})(v)| \leq \varepsilon.$ Let $\varepsilon >0$ and $\alpha \in D$ with $\alpha_\varepsilon \leq \alpha.$ From the convexity of $F$ and the fact that $z_\alpha^* \in \partial F( x+ \sqrt{r_\alpha} v ),$ it follows that
$$
F( x+ \sqrt{r_\alpha} v -\sqrt{r_\alpha} v ) - F(x+ \sqrt{r_\alpha} v) \geq z_\alpha^* ( - \sqrt{r_\alpha} v),
$$
and so
\begin{equation}\label{estimationsmallernecessity}
F( x+ \sqrt{r_\alpha} v ) -F(x) \leq z_\alpha^* (v)  \sqrt{r_\alpha}  \leq \Big ( (z_\alpha^*-x^*_{\varepsilon,\alpha})(v) + \sup_{x^* \in \partial F(x)}  x^*(v) \Big ) \sqrt{r_\alpha} \leq \Big ( \varepsilon + \sup_{x^* \in \partial F(x)}  x^*(v) \Big ) \sqrt{r_\alpha}
\end{equation}
Dividing by $\sqrt{r_\alpha}$ in \eqref{estimationbiggernecessity} and \eqref{estimationsmallernecessity}, we obtain
$$
 \varepsilon + \sup_{x^* \in \partial F(x)}  x^*(v)  \geq -\sqrt{r_\alpha} + (y_{k_\alpha}^*-\xi)(v)  + \xi(v) \quad \text{for all} \quad \alpha \in D,\: \alpha_\varepsilon \leq \alpha.
$$
Because the net $\lbrace -\sqrt{r_\alpha} + (y_{k_\alpha}^*-\xi)(v)\rbrace_{\alpha\in D}$ converges to $0,$ we obtain from the preceding inequality that $\xi(v) \leq \varepsilon + \sup_{x^* \in \partial F(x)}  x^*(v).$ Since $\varepsilon >0$ is arbitrary, this contradicts \eqref{separatingv}.

\subsection{Theorems \ref{mainmoregeneraltheorem} and \ref{maingeneraltheorem}. If part.} Let us assume that $(f,G)$ satisfies conditions $(C), (GEX)$ and $(CS)$ of Theorem \ref{mainmoregeneraltheorem} and define
$$
F(x) = \sup_{y\in E} \sup_{y^* \in G(y)} \lbrace f(y) +y^* (x-y) \rbrace, \quad x\in X.
$$

\begin{claim}\label{claimfiniteness}
{\em $F$ is finite everywhere in $X.$ }
\end{claim}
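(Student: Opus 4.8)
$F$ is finite everywhere.

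The plan is to show that the supremum defining $F(x)$ is bounded above at every point $x \in X$. Since every term $f(y) + y^*(x-y)$ with $y^* \in G(y)$ is a candidate, finiteness cannot fail from below (the supremum is at least $f(x_0) + x_0^*(x - x_0)$ for any fixed $x_0 \in E$, $x_0^* \in G(x_0)$, assuming $E \neq \emptyset$). So the entire issue is to produce, for each fixed $x$, a finite upper bound for the quantities $f(y) + y^*(x-y)$ as $(y, y^*)$ ranges over the graph of $G$.

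First I would suppose, seeking a contradiction, that $F(x_0) = +\infty$ for some fixed $x_0 \in X$. Then there exist sequences $(y_k)_k \subset E$ and $(y_k^*)_k \subset X^*$ with $y_k^* \in G(y_k)$ such that
$$
\lim_k \bigl( f(y_k) + y_k^*(x_0 - y_k) \bigr) = +\infty,
$$
equivalently $\lim_k \bigl( y_k^*(y_k - x_0) - f(y_k) \bigr) = -\infty$. I would then split into two cases according to whether the norms $\|y_k^*\|_*$ remain bounded along a subsequence or tend to infinity.

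The bounded case is the easy one: if $(\|y_k^*\|_*)_k$ has a bounded subsequence, then along that subsequence $|y_k^*(x_0 - y_k)|$ is controlled by $\|y_k^*\|_* \, \|x_0 - y_k\|$, but this still involves $\|y_k\|$, so a little more care is needed — I expect to use condition $(C)$ to bound $f(y_k)$ from below in terms of a fixed supporting functional at some base point, and then observe that the combination $f(y_k) + y_k^*(x_0 - y_k)$ stays bounded, contradicting that it tends to $+\infty$. The remaining case, $\lim_k \|y_k^*\|_* = +\infty$, is where the existence condition does its work. Here I would invoke $(GEX)$ (respectively $(EX)$ in the setting of Theorem \ref{maingeneraltheorem}): condition $(GEX)$ asserts precisely that $\lim_k \bigl( y_k^*(y_k - x_0) - f(y_k) \bigr) = +\infty$, which flatly contradicts the assumption that this expression tends to $-\infty$. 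This is the main obstacle and the crux of the argument — recognizing that $(GEX)$ is exactly tailored to kill the divergence of the supremum in the direction where the subgradients blow up.

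I expect the only genuinely delicate bookkeeping to be in the bounded-norm case, where one must combine $(C)$ (to keep $f(y_k)$ from drifting to $-\infty$ faster than the linear term can compensate) with the boundedness of $(y_k^*)_k$; the large-norm case is an essentially immediate application of $(GEX)$. Since at least one of the two cases must occur along any subsequence, both lead to contradictions, and hence $F(x_0) < +\infty$. As $x_0$ was arbitrary, $F$ is finite everywhere on $X$.
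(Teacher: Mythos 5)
Your overall architecture --- fix $x_0$, take a sequence realizing the supremum, split on whether $\|y_k^*\|_*$ stays bounded along a subsequence or tends to infinity, and kill the blow-up case with $(GEX)$ --- is exactly the paper's, and your treatment of the unbounded case is correct. The genuine problem is in the bounded case, precisely the part you flag as needing care: you propose to use $(C)$ to bound $f(y_k)$ \emph{from below} via a fixed supporting functional at a base point, i.e. $f(y_k) \geq f(z_0) + z_0^*(y_k - z_0)$ with $z_0^* \in G(z_0)$ fixed. That inequality cannot do the job: you must show that $f(y_k) + y_k^*(x_0 - y_k)$ is bounded \emph{above}, and a lower bound on $f(y_k)$ gives no control from above at all. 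Likewise, your parenthetical worry about ``$f(y_k)$ drifting to $-\infty$'' is never the threat --- that would only push the supremum down; the threat is the expression drifting to $+\infty$. As described, the bounded case does not close.

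The fix is to apply $(C)$ in the opposite direction, with the \emph{moving} pair $(y_k, y_k^*)$ as the supporting data and the fixed point $z_0 \in E$ as the evaluation point: $f(z_0) \geq f(y_k) + y_k^*(z_0 - y_k)$. Rearranging and adding $y_k^*(x_0 - y_k)$, the terms linear in $y_k$ cancel exactly:
$$
f(y_k) + y_k^*(x_0 - y_k) \leq f(z_0) + y_k^*(x_0 - z_0) \leq f(z_0) + \|y_k^*\|_* \, \|x_0 - z_0\|,
$$
which is bounded whenever $(y_k^*)_k$ is bounded --- the contradiction you want, with no dependence on $\|y_k\|$. This one-line inequality is the heart of the paper's proof: there, one takes any sequence with $f(y_k) + y_k^*(x - y_k) \to F(x)$, notes via this bound that $F(x) < \infty$ as soon as $(y_k^*)_k$ is bounded, and, if $(y_k^*)_k$ is unbounded, invokes $(GEX)$ to force $f(y_k) + y_k^*(x - y_k) \to -\infty$ along a subsequence, contradicting $F(x) > -\infty$. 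So your skeleton is right, but the inequality you name for the key step is the wrong one of the two that $(C)$ offers.
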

\begin{proof}
Consider $x \in X$ and sequences $(y_k)_k \subset E, \:(y_k^*)_k $ with $y_k^* \in G(y_k)$ for every $k,$ such that 
\begin{equation}\label{sequenceykyk*0}
F(x) = \lim_k \left( f(y_k) + y_k^* (x-y_k) \right). 
\end{equation}
If we take some $z_0 \in E,$ condition $(C)$ yields $f(z_0) \geq f(y_k)+  y_k^*(z_0-y_k),$ hence
$$
f(y_k) + y_k^* (x-y_k) \leq f(z_0) + y_k^* (x-z_0).
$$
This shows that $F(x)$ will be finite as soon as we prove that $(y_k^*)_k$ is bounded. Assume, for the sake of contradiction, that $(y_k^*)_k$ is unbounded. Then condition $(GEX)$ tells us that, possibly after passing to a subsequence, $\lim_k \left( f(y_k)+y_k^*(x-y_k) \right)= -\infty.$ This contradicts \eqref{sequenceykyk*0}, since obviously $F(x) > -\infty.$  
\end{proof}

\begin{claim}\label{claimfinitenessboundedbounded}
{\em Assuming further that $(f,G)$ satisfies condition $(EX)$ of Theorem \ref{maingeneraltheorem}, $F$ is bounded on bounded subsets of $X.$ }
\end{claim}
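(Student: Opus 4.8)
The plan is to produce a bound on $F(x)$ that is uniform over each ball $\lbrace \|x\| \leq R \rbrace$ by estimating every affine term $f(y)+y^*(x-y)$ separately. First I would rewrite, for $\|x\|\leq R$ and any $y\in E,\, y^*\in G(y)$,
$$
f(y)+y^*(x-y)=f(y)-y^*(y)+y^*(x)\leq R\|y^*\|_* - \big( y^*(y)-f(y) \big),
$$
using $y^*(x)\leq \|y^*\|_*\|x\|\leq R\|y^*\|_*$. Since the right-hand side does not depend on $x$, taking the supremum over $y\in E$, $y^*\in G(y)$ and then over $\|x\|\leq R$ gives
$$
\sup_{\|x\|\leq R} F(x) \leq \sup \big\lbrace R\|y^*\|_* - \big( y^*(y)-f(y) \big) \: : \: y\in E,\: y^*\in G(y) \big\rbrace,
$$
so it suffices to prove that the latter supremum is finite.

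I would argue this by contradiction. If the supremum were $+\infty$, there would exist sequences $(y_k)_k\subset E$ and $(y_k^*)_k$ with $y_k^*\in G(y_k)$ and $R\|y_k^*\|_* - \big( y_k^*(y_k)-f(y_k) \big)\to +\infty$. Passing to a subsequence, I may assume either that $(y_k^*)_k$ is bounded or that $\|y_k^*\|_*\to +\infty$, and I would rule out both cases.

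In the bounded case, I would fix some $z_0\in E$ and use condition $(C)$ in the form $f(z_0)\geq f(y_k)+y_k^*(z_0-y_k)$, which yields $y_k^*(y_k)-f(y_k)\geq y_k^*(z_0)-f(z_0)$; thus $y_k^*(y_k)-f(y_k)$ stays bounded below while $R\|y_k^*\|_*$ stays bounded above, so $R\|y_k^*\|_* - \big( y_k^*(y_k)-f(y_k)\big)$ is bounded above, contradicting that it tends to $+\infty$. In the unbounded case $\|y_k^*\|_*\to +\infty$, condition $(EX)$ gives $\big( y_k^*(y_k)-f(y_k)\big)/\|y_k^*\|_*\to +\infty$, so eventually $y_k^*(y_k)-f(y_k)\geq (R+1)\|y_k^*\|_*$, whence $R\|y_k^*\|_* - \big( y_k^*(y_k)-f(y_k)\big)\leq -\|y_k^*\|_*\to -\infty$, again a contradiction.

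The main obstacle is identifying the correct quantity to control: the passage from a pointwise finiteness statement (already established in Claim \ref{claimfiniteness}) to a uniform bound on bounded sets reduces, after splitting off the term $y^*(x)\leq R\|y^*\|_*$, to controlling $y^*(y)-f(y)$ against $\|y^*\|_*$. This is precisely the strengthened growth condition $(EX)$ in the regime $\|y^*\|_*\to\infty$, with $(C)$ supplying the required lower bound in the complementary regime where $\|y^*\|_*$ stays bounded. Once this dichotomy is set up, the estimates are routine.
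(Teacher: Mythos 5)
Your proof is correct. It uses exactly the same two ingredients in exactly the same roles as the paper --- condition $(C)$ to rule out bounded dual sequences, condition $(EX)$ to rule out unbounded ones --- but the organization is genuinely different and, in fact, cleaner. The paper argues by contradiction at the level of points: it takes a bounded set $B$ and a sequence $(x_k)_k\subset B$ with $F(x_k)\to+\infty$, chooses near-optimal pairs $(y_k,y_k^*)$ for each $x_k$, and needs an auxiliary selection step (requiring the affine function attached to $(y_k,y_k^*)$ to dominate the one attached to a fixed $(z_0,z_0^*)$ at the point $x_k$) in order to control the error terms before invoking $(EX)$. You instead eliminate the variable $x$ at the very start via $y^*(x)\leq R\|y^*\|_*$, which reduces the whole claim to the finiteness of the single supremum $\sup\bigl\lbrace R\|y^*\|_*-\bigl(y^*(y)-f(y)\bigr) : y\in E,\ y^*\in G(y)\bigr\rbrace$, a quantity depending only on the data $(f,G)$; the bounded/unbounded dichotomy then disposes of it in two lines. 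This buys an explicit uniform bound on each ball (not just a contradiction) and avoids the paper's double near-optimizer bookkeeping. One pedantic remark applicable to both arguments: ``bounded on bounded sets'' also requires a lower bound, but that is immediate since $F$ majorizes the single affine function $x\mapsto f(z_0)+z_0^*(x-z_0)$ for any fixed $z_0\in E$, $z_0^*\in G(z_0)$; you may want to add that sentence, as your estimate only addresses the bound from above.
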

\begin{proof}
Since condition $(EX)$ is stronger that $(GEX)$ of Theorem \ref{mainmoregeneraltheorem}, we already know that $F$ is finite everywhere in this case, by virtue of Claim \ref{claimfiniteness}. Let us fix $z_0 \in E$ and $z_0^* \in G(z_0).$ Assume, seeking a contradiction, that $B$ is a bounded subset of $X$ for which $F|_B$ is unbounded. Then we can find $(x_k)_k \subset B$ such that $\lim_k F(x_k)=+\infty.$ By the definition of $F(x_k),$ we can find sequences $(y_k)_k \subset E, \:(y_k^*)_k $ with $y_k^* \in G(y_k)$ for every $k,$ such that 
\begin{equation}\label{sequenceykyk*1}
F(x_k) \leq f(y_k) + y_k^* (x_k-y_k) + \frac{1}{k} \quad \text{for every } \: k. 
\end{equation}
Moreover, since $f(z_0) + z_0^* (x_k-z_0 )$ is one of the expressions considered in the definition of $F(x_k),$ the sequences $(y_k)_k,$ $(y_k^*)_k$ can be even selected so that
\begin{equation}\label{sequenceykyk*2}
f(z_0) + z_0^* (x_k-z_0 ) \leq f(y_k) + y_k^* (x_k-y_k)  \quad \text{for every } \: k. 
\end{equation}
Condition $(C)$ implies that
$$
f(y_k) + y_k^* (x_k-y_k) \leq f(z_0) + y_k^* (y_k-z_0)  + y_k^* (x_k-y_k ) = f(z_0) + y_k^* (x_k-z_0).
$$
Since $(x_k)_k$ is bounded, the preceding inequality shows that $(y_k^*)_k$ must be unbounded, as otherwise \eqref{sequenceykyk*1} would give that $(F(x_k))_k$ is bounded, a contradiction. Passing to a subsequence we may assume that $\lim_k \|y_k^*\|_*= +\infty,$ and then condition $(EX)$ tells us that $ \lim_k \frac{y_k^*(y_k)-f(y_k)}{\|y_k^*\|_*}= +\infty.$ Using \eqref{sequenceykyk*2} we easily obtain
$$
\frac{y_k^*(y_k)-f(y_k)}{\|y_k^*\|_*} \leq  \frac{1}{\|y_k^*\|_{*}} y_k^*  (x_k) - \frac{f(z_0) + z_0^*  (x_k-z_0)}{\|y_k^*\|_*},
$$
where the last term is bounded above. This yields a contradiction and shows that $F$ is bounded on $B.$ 
\end{proof}

\begin{claim}
{\em $F$ is continuous and convex on $X,$ $F=f$ and $G \subset \partial F$ on $E.$ }
\end{claim}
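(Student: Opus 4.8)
The plan is to check the four assertions separately, since three of them follow immediately from the definition of $F$ and the only real work lies in continuity. First, convexity and lower semicontinuity are free: $F$ is the pointwise supremum of the affine functions $x \mapsto f(y) + y^*(x-y)$ over $y \in E$ and $y^* \in G(y)$, and a supremum of affine (hence convex and continuous) functions is convex and lower semicontinuous on $X$.

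The hard part is continuity, and here I would exploit that $F$ is finite everywhere by Claim \ref{claimfiniteness}, so that $\dom F = X$. Being lower semicontinuous and convex with full domain, $F$ is continuous, and the cleanest justification is a Baire category argument. The sublevel sets $C_n = \lbrace x \in X : F(x) \le n \rbrace$ are closed (by lower semicontinuity) and convex, and they cover $X$ because $F$ is real-valued, so by the Baire category theorem some $C_n$ has nonempty interior. Thus $F$ is bounded above on an open ball, and a convex function that is bounded above on a neighbourhood of a point is locally Lipschitz---hence continuous---throughout the interior of its domain, which here is all of $X$; for this standard fact I would cite \cite{BorweinVanderwerffbook}. (Under the stronger condition $(EX)$ of Theorem \ref{maingeneraltheorem}, Claim \ref{claimfinitenessboundedbounded} already supplies boundedness on bounded sets, so continuity is immediate and the Baire step is unnecessary.)

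Next I would record $F = f$ on $E$ via two inequalities. For $x \in E$, picking any $x^* \in G(x)$ and taking $y = x$, $y^* = x^*$ in the defining supremum gives $F(x) \ge f(x)$; conversely, condition $(C)$ states $f(x) \ge f(y) + y^*(x-y)$ for all $y \in E$, $y^* \in G(y)$, and passing to the supremum yields $F(x) \le f(x)$, so $F(x) = f(x)$.

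Finally, for the inclusion $G(x) \subseteq \partial F(x)$ I would fix $x \in E$ and $x^* \in G(x)$ and observe that for every $z \in X$ the term $f(x) + x^*(z-x)$ is among those in the supremum defining $F(z)$, whence $F(z) \ge f(x) + x^*(z-x) = F(x) + x^*(z-x)$, using $F(x) = f(x)$ from the previous step; this is exactly $x^* \in \partial F(x)$. The only genuine obstacle in the whole claim is thus the passage from pointwise finiteness to local boundedness from above needed for continuity; the reverse inclusion $\partial F(x) \subseteq G(x)$, which is where condition $(CS)$ enters, is addressed separately and is not part of this claim.
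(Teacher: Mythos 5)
Your proof is correct and follows essentially the same route as the paper: convexity and lower semicontinuity from the supremum representation, $F=f$ on $E$ from condition $(C)$ together with the definition of $F$, and $G\subset\partial F$ from the inequality $F(z)\geq f(x)+x^*(z-x)=F(x)+x^*(z-x)$. The only difference is that where the paper simply cites \cite[Proposition 4.1.5]{BorweinVanderwerffbook} for the fact that a finite lower semicontinuous convex function on a Banach space is continuous, you reprove that fact via the standard Baire category argument, which is precisely the proof underlying the cited proposition.
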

\begin{proof}
The function $F,$ being the supremum of a family of lower semicontinuous convex functions, is convex and lower semicontinuous as well. Moreover, we learnt from Claim \ref{claimfiniteness} that $\dom(F) =X.$ Since $X$ is a Banach space, every lower semicontinuous convex function on $X$ is continuous on $\interior(\dom(F))$ (see for instance \cite[Proposition 4.1.5, p. 129]{BorweinVanderwerffbook}), hence $F$ is continuous on $X.$

\medskip

The inequality $F \geq f$ on $E$ is obvious by definition of $F$, and the converse inequality follows immediately from condition $(C).$ Finally, for every $x\in E, z\in X, x^* \in G(x)$, the definition of $F$ and the equality $F=f$ on $E$ give
$$
F(z) \geq f(x) + x^* (z-x) = F(x)+ x^* ( z-x ),
$$
and then $x^* \in \partial F(x).$ 
\end{proof}

To conclude the proof of Theorems \ref{mainmoregeneraltheorem} and \ref{maingeneraltheorem} it only remains to prove the following.
\begin{claim}\label{claimsamesubdifferential}
{\em $\partial F= G$ on $E.$}
\end{claim}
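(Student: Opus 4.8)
Since the previous claim already gives $G(x)\subseteq\partial F(x)$ for every $x\in E$, it only remains to prove the reverse inclusion $\partial F(x)\subseteq G(x)$. The plan is to argue by contradiction. I fix $x\in E$ and $x^*\in\partial F(x)$, assume $x^*\notin G(x)$, and exploit that $G(x)$ is convex and $w^*$-compact to separate it from $x^*$ by the Hahn--Banach theorem in $(X^*,w^*)$: this yields a vector $v\in X$ with $x^*(v)>\sup_{u^*\in G(x)}u^*(v)$. The goal is then to manufacture, via condition $(CS)$, an element $u^*\in G(x)$ with $u^*(v)\ge x^*(v)$, which contradicts the separation at once.

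To build such an element I would use that $x^*\in\partial F(x)$ controls the one-sided directional behaviour of $F$ at $x$ in the direction $v$. Fixing a sequence $t_k\downarrow 0$ and choosing, for each $k$, a pair $(y_k,y_k^*)$ with $y_k^*\in G(y_k)$ that is $t_k^2$-optimal in the supremum defining $F(x+t_kv)$, I combine the subgradient inequality $F(x+t_kv)\ge f(x)+t_kx^*(v)$ (recall $F(x)=f(x)$) with condition $(C)$, which guarantees $D_k:=f(x)-f(y_k)-y_k^*(x-y_k)\ge 0$. This produces the basic estimate $y_k^*(v)\ge x^*(v)-t_k+D_k/t_k$; in particular $y_k^*(v)$ is bounded below by a quantity tending to $x^*(v)$, and $D_k/t_k\le y_k^*(v)-x^*(v)+t_k$.

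The main obstacle is to verify the two hypotheses required by $(CS)$: that $(y_k^*)_k$ is bounded and that $D_k\to 0$. For boundedness I argue by contradiction: if $\|y_k^*\|_*\to\infty$ along a subsequence, then condition $(GEX)$ applied at the fixed point $x$ forces $f(y_k)+y_k^*(x-y_k)\to-\infty$, i.e. $D_k\to+\infty$. Using that $F$ is continuous, hence locally Lipschitz at $x$, the $t_k^2$-optimality shows that $g_k(t_k):=f(y_k)+y_k^*(x+t_kv-y_k)\to f(x)$ and that $(g_k(t_k)-f(x))/t_k$ stays bounded, whence $y_k^*(v)=D_k/t_k+O(1)\to+\infty$. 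Feeding this into condition $(GEX)$ applied at the fixed point $x+v$, which forces $(f(x)-D_k)+y_k^*(v)\to-\infty$, yields $D_k(1/t_k-1)+O(1)\to-\infty$, absurd since $D_k\to+\infty$ and $1/t_k-1\to+\infty$. Hence $(y_k^*)_k$ is bounded, and then the estimate for $D_k/t_k$ gives $0\le D_k\le Ct_k\to 0$.

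With both hypotheses in hand I would apply condition $(CS)$, not to the whole sequence but to each tail $(y_k,y_k^*)_{k\ge m}$, obtaining nonempty sets $G(x)\cap\overline{\{y_k^*:k\ge m\}}^{w^*}$; these are $w^*$-compact (the $y_k^*$ being bounded) and nested, so their intersection contains a point $u^*\in G(x)$ that is a genuine $w^*$-cluster point of $(y_k^*)_k$. Since $y_k^*(v)\ge x^*(v)-t_k$ for every $k$ and $t_k\to 0$, every such cluster point satisfies $u^*(v)\ge x^*(v)$, contradicting the separation and finishing the proof. The two delicate points are the passage to tails, needed because the $w^*$-closure of the full sequence could a priori contain early terms violating $u^*(v)\ge x^*(v)$, and the use of the local Lipschitz property of $F$ to keep $(g_k(t_k)-f(x))/t_k$ under control in the boundedness step.
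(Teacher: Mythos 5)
Your proof is correct, and its skeleton coincides with the paper's: separate $x^*$ from the $w^*$-compact convex set $G(x)$ via Hahn--Banach in $(X^*,w^*)$, pick near-maximizing pairs $(y_k,y_k^*)$ for $F(x+t_k v)$ with error $o(t_k)$, prove that $(y_k^*)_k$ is bounded using $(GEX)$, deduce $D_k\to 0$, and then use $(CS)$ to produce an element of $G(x)$ that violates the separation. Two details differ. First, your boundedness argument (local Lipschitzness of $F$ plus $(GEX)$ applied at $x$ and at $x+v$) replaces the paper's: there, $(GEX)$ is applied only at the single point $x+u$, and the needed lower bound $y_k^*(u)\ge x^*(u)-k2^{-k}$, for a fixed $x^*\in G(x)$, is extracted from convexity and near-optimality; both routes work, and in fact your first application of $(GEX)$ is dispensable, since $D_k\ge 0$ and the Lipschitz bound already give $y_k^*(v)-D_k\ge -C$ for large $k$, which contradicts $(GEX)$ at $x+v$ by itself. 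Second, and more substantively, you apply $(CS)$ to every tail $\{y_k^*\colon k\ge m\}$ and intersect the resulting nested nonempty $w^*$-compact sets, obtaining a genuine $w^*$-cluster point $u^*\in G(x)$, whence $u^*(v)\ge \liminf_k y_k^*(v)\ge x^*(v)$. The paper applies $(CS)$ once, to the whole sequence, and then chooses $k_\varepsilon$ with both $|(y_{k_\varepsilon}^*-x_0^*)(u)|\le\varepsilon$ and $k_\varepsilon 2^{-k_\varepsilon}\le\varepsilon$; such a $k_\varepsilon$ exists for every $\varepsilon>0$ only if $x_0^*$ can be approximated (at least in the direction $u$) by terms of arbitrarily large index, and this could fail if $x_0^*$ lies in $\overline{\{y_k^*\}_k}^{w^*}$ merely because it coincides with finitely many early terms of the sequence. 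Your tail-plus-compactness argument closes exactly this loophole, so on this point your write-up is more careful than the paper's own.
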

\begin{proof}
Let $x\in E$ and suppose that there exists $\xi \in \partial F(x) \setminus G(x).$ Since $G(x)$ is $w^*$-closed and convex, the Hahn-Banach Theorem for $(X^*,w^*)$ provides us with some $u\in X$ such that
\begin{equation}\label{separatingu}
\xi (u) > \sup_{x^*\in G(x)} x^*(u).
\end{equation}
We now pick two sequences $(y_k)_k \subset E, \: (y_k^*)_k \subset X$ with $y_k^* \in G(y_k)$ such that
\begin{equation}\label{sequenceykyk*3}
F(x+\tfrac{1}{k}u ) \geq f(y_k)+ y_k^* (x+ \tfrac{1}{k} u-y_k) \geq F(x+\tfrac{1}{k}u ) - \tfrac{1}{2^k} \quad \text{for every } \: k.
\end{equation}
The sequence $(y_k^*)_k$ must be bounded. Indeed, let us assume that $\lim_k \| y_k^*\|_* = +\infty.$ By condition $(GEX)$ we have that
\begin{equation}\label{limitx+u}
\lim_k \left( f(y_k)+y_k^*(x+u-y_k) \right)=-\infty.
\end{equation}
On the other hand, using the convexity of $F$ in combination with \eqref{sequenceykyk*3} and taking some $x^*\in G(x),$ we have that
\begin{align*}
f(y_k) + y_k^* (x+ \tfrac{1}{k} u-y_k)& \geq F(x+\tfrac{1}{k}u ) - 2^{-k} \geq f(x) +x^* \left( x+\tfrac{1}{k}u -x \right) -2^{-k} \\
&   \geq f(y_k) + y_k^* (x-y_k) + x^*( \tfrac{1}{k} u )-2^{-k},
\end{align*}
which implies that 
$$
y_k^*(u) \geq x^*(u)-k 2^{-k}
$$ 
for every $k.$ This shows that $( y_k^*(u))_k$ is bounded below and then, by virtue of \eqref{sequenceykyk*3} and \eqref{limitx+u}, we obtain
$$
F(x)=\lim_k \left( f(y_k)+y_k^*(x+\tfrac{1}{k} u-y_k) \right) = \lim_k \left( f(y_k)+ y_k^*(x+u-y_k)- (1- \tfrac{1}{k}) y_k^*(u) \right) = -\infty,
$$
which is absurd. Thus $(y_k^*)_k$ is a bounded sequence. Observe that the fact that $(y_k^*)_k$ is bounded together with \eqref{sequenceykyk*3} imply that 
$$
\lim_k \left( f(x)-f(y_k)- y_k^* (x-y_k ) \right)=0.
$$ 
Hence, condition $(CS)$ gives a point $x_0^* \in G(x) \cap \overline{\lbrace y_k^* \rbrace_k}^{w^*} .$ Given $\varepsilon >0,$ we can find $k=k_\varepsilon \in \N$ such that $| (y_k^*-x_0^*)(u)| \leq \varepsilon$ and $k 2^{-k} \leq \varepsilon.$  Using that $\xi \in \partial F(x)$ and \eqref{sequenceykyk*3} we can write
\begin{align*}
\tfrac{1}{k} \xi (u) & \leq F(x+\tfrac{1}{k}u)-F(x) = F(x+\tfrac{1}{k}u)-f(x) \leq F(x+\tfrac{1}{k}u) -f(y_k)-y_k^* (x-y_k) \\
& \leq \tfrac{1}{k} y_k^*( u )+ \tfrac{1}{2^k} = \tfrac{1}{k} (y_k^*-x_0^*)(u) + \tfrac{1}{k} x_0^* (u) + \tfrac{1}{2^k} \leq \tfrac{1}{k} (y_k^*-x_0^*)(u) + \tfrac{1}{k} \sup_{x^* \in G(x)}  x^*( u)  + \tfrac{1}{2^k}.
\end{align*}
We thus have that
$$
 \xi  (u) \leq (y_k^*-x_0^*)(u) + \sup_{x^* \in G(x)} x^* (u) + \tfrac{k}{2^k} \leq 2 \varepsilon + \sup_{x^* \in G(x)} x^* (u),
$$
and letting $\varepsilon \to 0^+$ we obtain $\xi (u) \leq \sup_{x^*\in G(x)} x^*(u) ,$ which contradicts \eqref{separatingu}.
\end{proof}

\medskip

\subsection{Proof of Theorem \ref{maintheoremseparable}} It is enough to apply Theorem \ref{maingeneraltheorem} in combination with the following remark.
\begin{remark}\label{remarkcsseparablescs}
{\em If $X$ is separable, condition $(CS)$ in Theorems \ref{maingeneraltheorem} and \ref{mainmoregeneraltheorem} is equivalent to condition $(SCS)$ in Theorem \ref{maintheoremseparable}.}
\end{remark}
\begin{proof}
Assume that $(f,G)$ satisties condition $(CS)$ on a subset $E$ and consider $x\in E$ and $(y_k)_k \subset E, \: (y_k^*)_k \subset X^*$ such that $y_k^* \in G(y_k)$ for every $k, \: (y_k^*)_k$ is bounded and 
$$
\lim_k \left( f(x)-f(y_k)-y_k^* (x-y_k ) \right)=0.
$$ 
Since $X$ is separable, the bounded subset $G(x) \cup \overline{\lbrace y_k^* \rbrace_k}^{w^*}$ of $(X^*, w^*)$ is metrizable; let us denote a suitable distance by $d.$ By the $w^{*}$-compactness of $G(x),$ we can find a sequence $(x_k^*)_k \subset G(x)$ such that 
\begin{equation}\label{comparingdistanceseparable}
d(y_k^*, x_k^*)  =\dist(y_{k}^{*}, G(x)) := \inf \lbrace d(y_k^*,x^*) \: : \: x^*\in G(x) \rbrace \quad \text{for every} \: \: k.
\end{equation}
Assume, for the sake of contradiction, that $d(y_k^*,x_k^*)$ does not tend to $0.$ Then we can find a subsequence $(k_j)_j,$ a positive $\varepsilon$ and $\xi \in X^*$ such that $(y_{k_j}^*)_j$ $w^{*}$-converges to $\xi$ and $d(y_{k_j}^*, x_{k_j}^*) \geq \varepsilon$ for every $j.$ Then condition $(CS)$ says that $G(x) \cap \lbrace \xi, y_{k_j}^* \rbrace_j  \neq \emptyset$, which, because $\dist(y_{k_j}^{*}, G(x))\geq \varepsilon$, implies $\xi \in G(x),$ hence $d(y_{k_j}^*, \xi)  \geq \varepsilon$ for every $j$ by \eqref{comparingdistanceseparable}. This contradicts the fact that $w^*$-$\lim_j y_{k_j}^*= \xi$ and therefore $w^*$-$\lim( y_k^*-x_k^*)=0.$ 
\end{proof}

\section*{Acknowledgements}
We wish to thank the referees for several suggestions that made us improve the exposition of the paper. The authors were partially supported by Grant MTM2015-65825-P. \, D. Azagra and C. Mudarra also acknowledge financial support from the Spanish Ministry of Economy and Competitiveness, through the ``Severo Ochoa for Centres of Excellence in R\&D'' (Grant SEV-2015-0554).

\end{document}